\documentclass[11pt,a4paper]{amsart}
\allowdisplaybreaks[1]
\usepackage{mathtools}
\mathtoolsset{showonlyrefs}
\usepackage{amssymb}
\usepackage{mathrsfs}
\usepackage{amsbsy,amsgen,amscd,amsthm}
\usepackage{bm}
\usepackage{dsfont}
\usepackage{graphicx}
\usepackage{subfig}
\usepackage{caption}
\usepackage{float}
\usepackage{enumerate}
\usepackage{enumitem}
\usepackage{setspace}
\usepackage{tikz}
\usepackage[top=3.2cm,bottom=3.8cm,left=3cm,right=2cm]{geometry}
\usepackage{xcolor}
\usepackage[T1]{fontenc}
\usepackage{lmodern}
\usepackage[unicode=true,colorlinks,linkcolor=blue,citecolor=blue,urlcolor=blue,pagebackref]{hyperref}

\renewcommand {\thefootnote}{\fnsymbol{footnote}}

\theoremstyle{plain}
\newtheorem{theorem}{Theorem}[section]
\newtheorem{lemma}[theorem]{Lemma}

\newtheorem{problem}{Problem}

\theoremstyle{definition}
\newtheorem{definition}[theorem]{Definition}

\theoremstyle{remark}
\newtheorem{remark}[theorem]{Remark}

\numberwithin{equation}{section}

\newcommand{\R}{\mathbb{R}}

\newcommand{\sphere}{\mathbb{S}}
\newcommand{\Haus}{\mathcal{H}}
\newcommand{\Hem}{\operatorname{T}}
\newcommand{\eps}{\varepsilon}
\newcommand{\dd}{\,d}
\newcommand{\ip}[2]{\langle #1,#2\rangle}
\newcommand{\Harm}{\mathscr{H}}
\let\mathbbm\mathds

\newcommand\nnfootnote[1]{%
	\begin{NoHyper}
		\renewcommand\thefootnote{}\footnote{#1}%
		\addtocounter{footnote}{-1}%
	\end{NoHyper}
}

\begin{document}
\enlargethispage{3pt}
	
	\begin{center}
		{\large\bf A complete solution to the Gr\"unbaum--Loewner centroid problems}
	\end{center}
	
	\vskip 15pt
	\begin{center}
		{\small\bf Shouda\ Wang$^{1}$\ \  \ \ \ \ Ge\ Xiong$^{2}$\ \  \ \ \ \ Kaiwen\ Yang$^{2}$}\\~~ \\
		\small{$^{1}$Program in Applied and Computational Mathematics, Princeton University, Princeton, NJ 08544, USA}\\
		\small{$^{2}$School of Mathematical Sciences,  Tongji University, Shanghai 200092, P. R. China}
	\end{center}
	
	\vskip 5pt
	\nnfootnote{E-mail addresses: 1. shoudawang@princeton.edu;\ 2. xiongge@tongji.edu.cn;\ 3. yangkaiwen@tongji.edu.cn.}
	
\begin{center}
\begin{minipage}{13.5cm}
{{\bf Abstract:}
In the 1960s,  Gr\"unbaum and Loewner raised problems concerning the minimum possible number of hyperplane sections of a convex body in $\R^n$ whose centroids  coincide with the centroid of the body itself. By employing  the hemispherical transform and Morse theory, we provide a complete solution to these problems.}
			
\vskip 5pt{{\bf 2020 Mathematics Subject Classification:} 52A20, 33C55, 57R70.}
			
\vskip 5pt{{\bf Keywords:} Convex body,  centroid,   Morse function, spherical harmonics.}
\end{minipage}
\end{center}
	
\vskip 20pt
\section{\bf Introduction}
\vskip 5pt

Let $K$ be a convex body (i.e., a compact convex set with non-empty interior) in the $n$-dimensional Euclidean space, $\R^n$. The \emph{centroid} (also called \emph{center of mass} or \emph{barycenter}) of $K$ is the point
$$
  c(K)=\frac{1}{\Haus^n(K)}\int_K x\dd\Haus^n(x),$$
where $\Haus^n$ denotes the $n$-dimensional Hausdorff measure.
A \emph{hyperplane section} of $K$ is a set $K\cap H$, where $H$ is a hyperplane in $\R^n$ and $\Haus^{n-1}(K\cap H)>0$. So, the centroid of $K\cap H$ is 
$$c(K\cap H)=\frac{1}{\Haus^{n-1}(K\cap H)}\int_{K\cap H} x\dd\Haus^{n-1}(x).$$

For each  \emph{planar} convex set $K$, it  has at least \emph{three} chords whose centroids (i.e., midpoints) coincide with the centroid $c(K)$ of $K$. Please refer to   Bose~\cite{Bose1935ConvexOval} and Ehrhart~\cite{Ehrhart1955Minkowski}.
In 1961,  Grünbaum  asked whether the same phenomenon persists in arbitrary dimension \cite[p.~41]{Gr2}.

\begin{problem}[Grünbaum]\label{prob1}
 Let $K$ be a convex body in $\R^n$. Do there exist at least $n+1$ hyperplane sections whose centroids coincide with $c(K)$?
\end{problem}

In 1965, Loewner asked for the largest integer that can replace $n+1$ in the Grünbaum problem \cite[Problem~28]{Fenchel1967Convexity}.

\begin{problem}[Loewner]
  For each integer $n\geq 2$, define $\mu(n)$ as the largest integer  with the property that  every convex body
  $K\subseteq \R^n$ admits at least $\mu(n)$ hyperplane sections whose centroids coincide with $c(K)$. What is the value of $\mu(n)$?
\end{problem}

Hence, $\mu(2)\geq 3$, and the Grünbaum problem asks whether $\mu(n)\geq n+1$ always holds. By contrast, 
 that $\mu(n)\geq 1$ can be easily derived for dimensions  $n\geq 3$. See Lemma \ref{lem:lower1} for details.

The discrepancy  between the known lower bound $\mu(n)\geq 1$ and Grünbaum's conjectured lower bound $\mu(n)\geq n+1$ persisted  for a long time, until  Myroshnychenko, Tatarko and Yaskin \cite{MTY} recently \emph{disproved} the Grünbaum conjecture for dimensions $n\geq 5$. Precisely, they \cite{MTY} proved  that $\mu(n)=1$ for all $n\geq 5$ by using non-intersection bodies. Since non-intersection bodies do not exist in dimensions $3$ and $4$ (see Gardner~\cite{Gardner1994BP3} and Zhang~\cite{Zhang1999BP4}), their approach cannot cover dimensions $3$ and $4$.  Despite this, they conjectured that $\mu(3)=\mu(4)=1$ as well \cite[Remark~1]{MTY}.

It is interesting that Huang, Myroshnychenko, Tatarko  and Yaskin~\cite{HMTY26} very recently studied a hyperbolic analogue $\mu_{\mathrm{Hyp}}$ of $\mu$, and proved that $\mu_{\mathrm{Hyp}}(2)=3$ and  $\mu_{\mathrm{Hyp}}(n)=1$ for all $n\geq 3$ by using the existence of hyperbolic analogues of non-intersection bodies established by Yaskin~\cite{Yaskin06}. 

In this article, we prove that $\mu(n)\leq 1$ for all dimensions $n\geq 3$. This, together with established results, provides a \emph{complete} solution to the Grünbaum--Loewner problems.

\begin{theorem}\label{thm1}
For each $n\geq 3$, there exists a smooth strictly convex body $K$ in  $\R^n$ for which exactly one hyperplane section has centroid $c(K)$. Consequently, 
$$\mu(n)=
    \begin{cases}
      3, & n=2, \\
      1, & n\geq 3.
    \end{cases} $$
\end{theorem}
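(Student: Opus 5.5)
Place $c(K)$ at the origin. I would first reduce the count to a statement about a function on the sphere. By a standard rigidity argument, a hyperplane section whose centroid is $c(K)=0$ must pass through $0$: if $H=\{\ip{x}{u}=t\}$ with $t\neq0$ then $\ip{c(K\cap H)}{u}=t\neq 0$. So we only need to look at central sections $K\cap u^\perp$ for $u\in\sphere^{n-1}$, where $u$ and $-u$ give the same section. Write $K$ via its radial function $\rho$. In polar coordinates inside $u^\perp$,
$$\int_{K\cap u^\perp}x\,d\Haus^{n-1}(x)=\frac1n\int_{\sphere^{n-1}\cap u^\perp}\rho(\theta)^n\theta\,d\theta .$$
Hence $c(K\cap u^\perp)=0$ exactly when the vector-valued spherical Radon transform $R[\rho^n\theta](u)$ vanishes. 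The condition $c(K)=0$ says that $\int_{\sphere^{n-1}}\rho^{n+1}\theta=0$.

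The convenient reformulation uses the hemispherical transform $\Hem f(u)=\int_{\ip{\theta}{u}>0}f$. For $f=\rho^{n+1}/(n+1)$, this gives the (vector) moment of the half $K\cap\{\ip{x}{u}\ge0\}$. Differentiating in $u$ tangentially produces an integral over the equator $u^\perp$ of $\rho^{n+1}$ times tangent data. I expect the key identity to be
$$\nabla_{\sphere}\Hem(\rho^{n+1})(u)=\text{const}\cdot\int_{\sphere^{n-1}\cap u^\perp}\rho^{n+1}(\theta)\,\theta\,d\theta$$
(up to the precise power; one has to check whether $\rho^n$ or $\rho^{n+1}$ appears). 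The intended consequence is this: central sections through the centroid correspond to antipodal pairs of critical points of the even or odd function $g=\Hem(\rho^{n+1})$ on $\sphere^{n-1}$. Since $\int\rho^{n+1}\theta=0$, we have $g(u)+g(-u)=\int\rho^{n+1}$ constant, so $g-\frac12\int\rho^{n+1}$ is odd and its critical set is antipodally symmetric. The goal then becomes: construct a positive smooth $\rho$, giving a strictly convex body, such that this odd function has exactly two critical points $\pm u_0$, i.e.\ one section.

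Morse theory tells us the minimum count. An odd Morse function on $\sphere^{n-1}$ descends to $\mathbb{RP}^{n-1}$ only as a section of a line bundle, so the usual $n$-critical-pair lower bound does not apply, and the global max and min at $\pm u_0$ alone are allowed. The model example is $h(u)=\ip{u}{e_n}$, which has exactly two critical points. I would aim to find an even perturbation $\rho^{n+1}=1+\eps\phi$ with $\Hem\phi$ close to a function of the form $h+(\text{small})$ having only the critical points $\pm e_n$. Via Funk–Hecke, $\Hem$ acts on spherical harmonics of odd degree $k$ by explicit nonzero multipliers $\lambda_k$, vanishes on even degree $k\ge 2$, and maps constants to constants. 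Odd $\rho^{n+1}$ parts are needed, but degree 1 must vanish for the centroid condition. So I would take $\phi$ as a combination of degree-3 harmonics and choose $\Hem\phi$ to be a degree-3 zonal or nonzonal odd polynomial whose restriction to the sphere has exactly two nondegenerate critical points. For $n\ge3$ such odd cubics exist, for example $x_n^3+a(\ldots)$ suitably arranged; this is exactly where $n=2$ fails, since on $\sphere^1$ any odd function with no degree-1 component has at least $6$ critical points.

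The main obstacle is this last step: exhibiting, for every $n\ge3$, an odd harmonic polynomial (or a finite sum of odd harmonics of degree $\ge3$) with no degree-1 part whose restriction to $\sphere^{n-1}$ has exactly two critical points, and verifying this. For the verification I would try an explicit zonal-like choice such as $u_n^3-\tfrac{3}{n+2}u_n|u|^2$ combined with terms that kill the extra critical circle, checking the Lagrange conditions by hand. After that, small $\eps$ ensures that $\rho$ is smooth and that $K$ is strictly convex (a perturbation of the ball). Critical points are preserved because the critical-point structure scales with $\eps$ exactly, owing to linearity. Finally, $\mu(n)\ge1$ from Lemma~\ref{lem:lower1} together with the planar result $\mu(2)=3$ gives the formula.
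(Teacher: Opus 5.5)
Your skeleton matches the paper's: reduce to critical points of the half-space volume function, perturb the ball by an odd term with no degree-one part, and invert $\Hem$ on odd harmonics. The decisive step, however, is missing, and one intermediate claim is false.

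\textbf{The missing construction.} You need an $f\in C^\infty_{\mathrm{odd}}(\sphere^{n-1})$ that is Morse, has exactly two critical points, and satisfies $\int_{\sphere^{n-1}}uf(u)\dd\Haus^{n-1}(u)=0$. You name this as the main obstacle and then only assert that suitable odd cubics exist; that assertion is unsupported.

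Starting from the zonal harmonic $u_n^3-\frac{3}{n+2}u_n|u|^2$ cannot work by small corrections. Its extra critical sets $\{u_n=\pm(n+2)^{-1/2}\}$ are nondegenerate (Morse--Bott) $(n-2)$-spheres, and every $C^2$-small perturbation keeps at least two critical points near each of them. You would therefore need a large deformation inside a finite-dimensional space of harmonics, and you give no mechanism or verification for one.

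The paper's idea is to fix the critical-point count by composing with a diffeomorphism, and to use that diffeomorphism to destroy the first moment. It sets $f=x_1\circ\Phi$, where $\Phi(x_1,x_2,y,t)=(\phi_{\alpha(t)}(x_1,x_2),y,t)$ is an odd diffeomorphism rotating the $(x_1,x_2)$-plane by an even angle $\alpha(t)$. Then $f$ is automatically Morse with exactly two critical points. The first moment reduces to the real and imaginary parts of $\int_{-1}^1(1-t^2)^{(n-1)/2}e^{i\alpha(t)}\dd t$, which vanishes once $\alpha$ makes a full turn in the weighted variable $s(t)$ (Lemmas~\ref{lem:alpha} and~\ref{prop2}). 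Only after such a smooth $f$ exists could you truncate its harmonic expansion and use Morse stability to obtain the polynomial you were looking for.

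\textbf{The error in the analytic step.} The correct identity uses $\rho^n$, not $\rho^{n+1}$. One has $A_K=\frac1n\Hem(\rho_K^n)$ and, for $v\in u^\perp$, $D_v\Hem(\rho^n)(u)=\int_{\sphere^{n-1}\cap u^\perp}\rho^n(\theta)\ip{\theta}{v}\dd\Haus^{n-2}(\theta)$. The condition $c(K)=o$, by contrast, involves $\rho^{n+1}$.

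Because the two powers differ, taking $\rho^{n+1}=1+\eps h$ gives $A_{K_\eps}=\text{const}+\frac{\eps}{n+1}\Hem h+\frac1n\Hem(R_\eps)$ with $\|R_\eps\|_{C^2}=O(\eps^2)$. So your claim that the critical-point structure scales exactly with $\eps$ ``owing to linearity'' is false. To close this step you need two further ingredients:
\begin{enumerate}
\item the $C^2$-boundedness of $\Hem$ (Lemma~\ref{lem1}), which gives $\eps^{-1}\Hem(R_\eps)\to0$ in $C^2$;
\item the stability statement that a $C^2$-close perturbation of a Morse function on a closed manifold has exactly the same number of critical points (Lemma~\ref{lem5}).
\end{enumerate}
Mere preservation of the Morse property would not suffice for the second point.
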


In this article,  a convex body $K$ in $\R^n$ is called \emph{smooth}, if $\partial K$ is a $C^\infty$ submanifold of $\R^n$; it is called \emph{strictly convex}, if for all distinct $x,y\in K$ and every $\lambda\in (0,1)$, one has $\lambda x+(1-\lambda)y\in \operatorname{int} K$.

We briefly outline the key ideas underlying the proof of the main results.

First, for a convex body $K$ in $\R^n$, its \emph{half-space volume function} is $A_K:\sphere^{n-1}\to \R,$ 
$$ A_K(u)=\Haus^n\bigl(K\cap \{x\in\R^n:\ip{x-c(K)}{u}\geq 0\}\bigr), \quad \forall \ u\in\sphere^{n-1},
$$ where $\sphere^{n-1}$ is the unit sphere of $\R^n$. Here and throughout this article,  $\ip{\cdot}{\cdot}$ denotes the standard inner product in $\R^n$. Suppose the centroid $c(K)$ of $K$ is at the origin $o$ of $\R^n$.  Then
$A_K$ has the property that its critical points are precisely the normal vectors to hyperplane sections whose centroids lie at the origin
(Lemma \ref{lemma1}).  Thus, to prove Theorem \ref{thm1}, it suffices  to exhibit a convex body  $K$ for which $A_K$  possesses exactly two antipodal critical points.

Second, we invoke the \emph{hemispherical transform} $\Hem$, which links the half-space volume function $A_K$ to the \emph{radial function} $\rho_K$ of  $K$ by 
$$A_K = \frac1n \Hem(\rho_K^n),$$
where $\rho_K(u)= \sup\{\lambda>0:\lambda u\in K\}$, $u\in\sphere^{n-1}$. We prove that $\Hem$ is an \emph{isomorphism} on the space of smooth odd functions on $\sphere^{n-1}$, and that the \emph{degree-one} spherical-harmonic component of $\Hem f$ vanishes  if and only if the {degree-one} spherical-harmonic component of $ f$ vanishes  (Lemma \ref{lem14}). So, if we  find a smooth odd Morse function $f$ on $\sphere^{n-1}$ with exactly two critical points and no degree-one component, 
then solving $\Hem h=f$ gives a smooth odd function $h$ with no degree-one component. 

Third, for small $\eps \in \R$ define
$$
  K_\eps=\{ru:\ 0\leq r\leq (1+\eps h(u))^{1/(n+1)}, ~ u\in\sphere^{n-1}\}.
$$
The absence of the degree-one component in $h$ ensures that $c(K_\eps)=o$, while the expansion of $A_{K_\eps}$ shows that its zeroth-order term is constant and that its rescaled first-order term converges to $f$ in $C^2$ as $\eps\to 0$.
The stability of Morse functions (Lemma \ref{lem5}) then implies that $A_{K_\eps}$ has the same number of critical points as $f$, which is two. Consequently, 
 $K_\eps$ has exactly one hyperplane section whose centroid agrees with $c(K_\eps)$, as desired.

Finally, it remains to construct such a Morse function  $f$. This is accomplished  by twisting a height function on the unit sphere $\sphere^{n-1}$; the details are given in Lemma \ref{prop2}.

\begin{remark}
Grünbaum~\cite{Gr2} further posed  a weaker question: does every convex body $K\subseteq \R^n$ contain a point $p\in\operatorname{int}K$ that serves as the centroid of at least $n+1$ hyperplane sections of $K$? Problem~\ref{prob1} then asks whether we may take $p=c(K)$ in this statement. Grünbaum claimed a proof of this weaker statement in~\cite[Sec.~6.2]{Grunbaum1963Measures}; however, Pat{\'a}kov{\'a}, Tancer and Wagner~\cite{PTW} found a substantive flaw in his argument. Nonetheless, the same authors~\cite{PTW} showed that the weaker statement does hold in dimension $n=3$. The cases $n\geq 4$ remain open to date.
\end{remark}

This paper is organized as follows.  In Section \ref{sec:pre}, we collect some necessary facts about the half-space volume function, the hemispherical transform, spherical harmonics, and perturbative estimates. We also include a proof of the planar case $\mu(2)=3$ in Section \ref{sec:pre} for completeness. In Section \ref{sec:main}, we first prove that the hemispherical transform is an isomorphism on smooth odd functions and is bounded on $C^k$ functions. We then establish a stability result of Morse functions under $C^2$ perturbations, and lastly prove Theorem~\ref{thm1}.

\textbf{Acknowledgments.} The results in this paper are obtained independently and simultaneously by Shouda Wang (at Princeton University) and by Ge Xiong and Kaiwen Yang (at Tongji University). The first author thanks Vlad Yaskin for his comments on an earlier draft. The research of the last two authors was supported by NSFC No. 12271407.

\vskip 20pt
\section{\bf Preliminaries}
\label{sec:pre}
\vskip 5pt

Let $K$ be a convex body in $\R^n$ and write $c(K)$ for its centroid. 
For $u\in\sphere^{n-1}$, write
$$u^\perp=\{x\in\R^n:\ip{x}{u}=0\}
  \quad\text{and}\quad
  u^\perp+c(K)=\{x+c(K):x\in u^\perp\}.$$
Then $K\cap (u^\perp+c(K))$ is a hyperplane section passing through the centroid $c(K)$. 
For the purpose of studying the Grünbaum--Loewner centroid problems, we naturally restrict our attention to such hyperplane sections.

For a smooth convex body $K$ in $\R^n$,  its radial function $\rho_K$ is in $ C^\infty(\sphere^{n-1})$. If   $K$ is a smooth convex body with positive Gauss curvature everywhere, then $K$ is strictly convex.

\subsection{The half-space volume function and the hemispherical transform}
\ 

A starting point of our approach is the following formula (see, e.g., Haddad, Jim{\'e}nez and Villa~\cite[Lemma~8]{HJV25}, and Pat{\'a}kov{\'a}, Tancer and Wagner~\cite[Prop.~11]{PTW}) for the half-space volume function
$ A_K(u)=\Haus^n\bigl(K\cap \{x\in\R^n:\ip{x-c(K)}{u}\geq 0\}\bigr), \  \forall \ u\in\sphere^{n-1}.$

\begin{lemma}\label{lemma1}
If $K\subseteq \R^n$ is a convex body with its centroid $c(K)$ at the origin $o$, then $A_K$ is of class $C^1$ and its directional derivative in $v\in T_u\sphere^{n-1}=u^\perp$ at $u\in \sphere^{n-1}$ is
$$D_v A_K(u)=\int_{K\cap u^\perp}\ip{x}{v}\dd\Haus^{n-1}(x).$$
Consequently, $u$ is a critical point of $A_K$ if and only if
$c(K\cap u^\perp)=c(K)=o$.
\end{lemma}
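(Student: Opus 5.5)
The plan is to write $A_K$ in polar coordinates and differentiate under the integral sign. Since $c(K)=o$, we have $A_K(u)=\int_{\sphere^{n-1}}\mathbbm{1}_{\{\ip{w}{u}\geq 0\}}\,\frac{\rho_K(w)^n}{n}\dd\Haus^{n-1}(w)$. The cleaner route, however, is a direct variational computation in $\R^n$. For $u\in\sphere^{n-1}$ and $v\in u^\perp$, set $u_t=(u+tv)/|u+tv|$. Then
$$A_K(u_t)-A_K(u)=\Haus^n(K\cap\{\ip{x}{u+tv}\geq 0\})-\Haus^n(K\cap\{\ip{x}{u}\geq0\}).$$
I will compute this difference with Fubini, slicing along the direction $u$. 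Write $x=y+su$ with $y\in u^\perp$ and $s\in\R$. The condition $\ip{x}{u+tv}\geq0$ reads $s\geq -t\ip{y}{v}$. So the difference equals
$$\int_{u^\perp}\int_{-t\ip{y}{v}}^{0}\mathbbm{1}_K(y+su)\dd s\dd\Haus^{n-1}(y).$$
Dividing by $t$ and letting $t\to0$, the inner integral divided by $t$ tends to $\ip{y}{v}\mathbbm{1}_K(y)$ for almost every $y$. This holds because, for $y\notin\partial(K\cap u^\perp)$ relative to $u^\perp$, membership of $y+su$ in $K$ is constant for small $|s|$ by convexity and openness, and the relative boundary has $\Haus^{n-1}$-measure zero. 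Domination comes from $|\ip{y}{v}|\le \mathrm{diam}(K)|v|$ on a bounded set, since the integrand vanishes outside the projection of $K$. Dominated convergence then gives $D_vA_K(u)=\int_{K\cap u^\perp}\ip{y}{v}\dd\Haus^{n-1}(y)$.

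For $C^1$ regularity, it suffices to show that the map $u\mapsto\int_{K\cap u^\perp}x\dd\Haus^{n-1}(x)$ is continuous on $\sphere^{n-1}$. Composing with an orthogonal map sending $u$ to nearby $u'$, this reduces to continuity of $\int_{u^\perp}y\,\mathbbm{1}_K(R_{u'}y)\dd\Haus^{n-1}(y)$. Here the indicator converges almost everywhere off the relative boundary of $K\cap u^\perp$, and dominated convergence applies again. Indeed $K\cap u^\perp$ has nonempty relative interior because $o\in\operatorname{int}K$: the centroid of a convex body lies in its interior. A function with continuous Gateaux derivatives depending linearly on $v$ is $C^1$.

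The step I expect to be the main obstacle is the almost-everywhere convergence of the indicators near the relative boundary. It must be justified that the boundary of the section has measure zero and that interior points of the section are interior points of $K$. Both facts follow from convexity together with $o\in\operatorname{int}K$, since the section is then a full-dimensional convex body in $u^\perp$.

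Finally, for the consequence: $u$ is critical if and only if $\int_{K\cap u^\perp}\ip{x}{v}\dd\Haus^{n-1}=0$ for all $v\in u^\perp$. Since $K\cap u^\perp\subseteq u^\perp$, this integral vector automatically has zero component along $u$. So the condition is equivalent to $\int_{K\cap u^\perp}x\dd\Haus^{n-1}=0$, that is, $c(K\cap u^\perp)=o=c(K)$, using $\Haus^{n-1}(K\cap u^\perp)>0$.
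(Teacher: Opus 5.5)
Your argument is correct. The paper does not prove this lemma itself: it quotes it from Haddad--Jim\'enez--Villa (Lemma~8) and Pat\'akov\'a--Tancer--Wagner (Prop.~11).

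Your first-variation computation therefore replaces a citation with a self-contained proof. It has three steps: slicing the difference of the half-spaces $\{\ip{x}{u}\geq 0\}$ and $\{\ip{x}{u+tv}\geq 0\}$ along $u$; dominated convergence, with the relative boundary of $K\cap u^\perp$ as the exceptional null set; and continuity of $u\mapsto\int_{K\cap u^\perp}x\dd\Haus^{n-1}(x)$ by rotating $u^\perp$ onto nearby hyperplanes.

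Your route also shows that the only property of $c(K)$ used is $o\in\operatorname{int}K$. So the characterization holds for half-spaces through any interior point, which is what Gr\"unbaum's weaker question in the paper's Remark needs.

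An alternative closer to the paper's own machinery is to differentiate $A_K=\frac1n\Hem(\rho_K^n)$ directly. For continuous $g$ one gets $D_v\Hem g(u)=\int_{\sphere^{n-1}\cap u^\perp}g(\omega)\ip{\omega}{v}\dd\Haus^{n-2}(\omega)$. With $g=\rho_K^n$, polar coordinates in $u^\perp$ turn this into your formula. On that route, continuity of $\rho_K$ (again a consequence of $o\in\operatorname{int}K$) replaces your null-set argument.

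The one step you state quickly is the passage from derivatives along the particular curves $u_t=(u+tv)/|u+tv|$ to $C^1$ regularity. It is harmless: in a gnomonic chart $z\mapsto(u_0+z)/|u_0+z|$, each coordinate line through $u$ is a reparametrization $t\mapsto u_{s(t)}$ with $s(0)=0$ and $s'(0)=1$, for a suitable $v\in u^\perp$. Hence the partial derivatives exist and are continuous.
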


Since $A_K(-u)=\Haus^n(K)-A_K(u),$  $\forall \ u  \in\sphere^{n-1}$,
it follows that the critical points of $A_K$ come in antipodal pairs.

The well-known bound $\mu(n)\geq 1$ is an immediate
consequence of 
Lemma \ref{lemma1}  (see Gr{\"u}nbaum~\cite[Theorem~1]{Gr2}, Meyer and Reisner~\cite[Lemma~8]{MR89},  or Steinhaus~\cite[(1)]{Steinhaus1955Applications}).

\begin{lemma}\label{lem:lower1}
  If $n\geq 2$ and $K\subseteq \R^n$ is  a convex body with centroid $c(K)=o$,
  then there exists a direction  $u_0\in \sphere^{n-1}$ such that  
  $ c\bigl(K\cap u_0^\perp\bigr)
  =o.$  Hence, $\mu(n)\geq 1$.
\end{lemma}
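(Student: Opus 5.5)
The idea is to obtain the desired direction as a critical point of the half-space volume function $A_K$. Lemma~\ref{lemma1} supplies both the regularity and the interpretation of its critical points.

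First, since $c(K)=o$, Lemma~\ref{lemma1} shows that $A_K$ is a $C^1$ function on the compact manifold $\sphere^{n-1}$. It therefore attains its maximum at some $u_0\in\sphere^{n-1}$. A maximum point of a $C^1$ function on a manifold without boundary is a critical point, so $D_vA_K(u_0)=0$ for every $v\in u_0^\perp$.

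Second, I verify that the hyperplane section through $u_0$ is genuine, i.e. that $\Haus^{n-1}(K\cap u_0^\perp)>0$. This follows because $o=c(K)$ lies in $\operatorname{int}K$: the centroid of a convex body is an interior point, since otherwise a supporting hyperplane through it would leave all of $K$ on one side, forcing $K$ to lie in that hyperplane. Consequently every hyperplane through $o$ meets $K$ in a set of positive $(n-1)$-measure.

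Third, Lemma~\ref{lemma1} gives $\int_{K\cap u_0^\perp}\ip{x}{v}\dd\Haus^{n-1}(x)=0$ for all $v\in u_0^\perp$. The same integral vanishes trivially for $v=u_0$, because $\ip{x}{u_0}=0$ for every $x\in u_0^\perp$. Hence the integral vanishes for all $v\in\R^n$, and dividing by the positive measure of the section yields $c(K\cap u_0^\perp)=o$.

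Finally, for an arbitrary convex body $K\subseteq\R^n$, I translate it so that $c(K)=o$; translation preserves centroids of sections. The argument above then produces at least one hyperplane section whose centroid is $c(K)$, so $\mu(n)\geq1$. I expect no real obstacle here. The only point needing care is the positivity of the section's measure, and that is handled by the interiority of the centroid.
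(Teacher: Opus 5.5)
Your proposal is correct and follows the paper's proof: you maximize the $C^1$ function $A_K$ on $\sphere^{n-1}$ and apply Lemma~\ref{lemma1} at the maximizer. Your extra checks, namely positivity of the section's measure via interiority of the centroid and the trivial component $v=u_0$, simply unpack the ``consequently'' clause of Lemma~\ref{lemma1}.
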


\begin{proof}
  By Lemma~\ref{lemma1}, the function $A_K$ is of class $C^1$. Since
  $\sphere^{n-1}$ is compact and has no boundary, $A_K$ attains its maximum at some point $u_0\in \sphere^{n-1}$ and 
  $\nabla_{\sphere^{n-1}} A_K(u_0)=0$.
  By Lemma~\ref{lemma1} again, we have $c(K\cap u_0^\perp)=o=c(K)$. In other words, $\mu(n)\geq 1$.
\end{proof}

In the following, we  express the half-space volume function $A_K$ of convex body $K$ as the \emph{hemispherical transform} of its radial function $ \rho_K$, which was introduced by Funk~\cite{Funk1916} on $\sphere^2$ and generalized to arbitrary dimensions by Rubin~\cite{Rubin}. Let $\mathbbm 1$ denote the indicator function. 

\begin{definition}
  The \emph{hemispherical transform} of a function $f\in C(\sphere^{n-1})$ is defined by
$$
  \Hem f(u)
  =
  \int_{\sphere^{n-1}}
  \mathbbm 1_{\{ \ip{\theta}{u}\geq 0\}}
  f(\theta)\dd\Haus^{n-1}(\theta),\quad  \forall \ u\in \sphere^{n-1}.
$$
\end{definition}

Let $K$ be a convex body in $\R^n$ with $c(K)=o$. Then, for every $u\in \sphere^{n-1}$,
\begin{align}\nonumber
  A_K(u)
  &= \Haus^n\bigl(K\cap \{x\in\R^n:\ip{x}{u}\geq 0\}\bigr) \\
  \nonumber
  &= \int_{\sphere^{n-1}}
  \int_0^{\rho_K(\theta)}\mathbbm 1_{\{\ip{\theta}{u}\geq 0\}} r^{n-1}\dd r\dd\Haus^{n-1}(\theta) \\
  &= \frac1n
  \int_{\sphere^{n-1}}\mathbbm 1_{\{\ip{\theta}{u}\geq 0\}}
  \rho_K(\theta)^n\dd\Haus^{n-1}(\theta) \nonumber.
 \end{align}
That is,
\begin{align}
 A_K(u)=\frac1n\Hem(\rho_K^n)(u),\quad \forall \ u\in \sphere^{n-1}. \label{eq3}
\end{align}
Formula~\eqref{eq3} is the bridge between the geometry of hyperplane sections and our analytic construction: it allows us to control the critical points of $A_K$ by choosing the radial function $\rho_K$ through the hemispherical transform.

\subsection{Spherical harmonics}
\ 

For later use, we collect some basic facts on spherical harmonics. One can refer to Groemer~\cite[Section 3]{Groemer}, Schneider~\cite[Appendix]{Schneider}, and Dai and Xu~\cite{DaiXu} for more details.

A \emph{spherical harmonic} of degree $m$ on $\sphere^{n-1}$ is the restriction to $\sphere^{n-1}$ of a homogeneous harmonic polynomial of degree $m$ on $\R^n$. If $Y_m$ is a spherical harmonic of degree $m$, then 
$$
  Y_m(-u)=(-1)^mY_m(u)
  \quad \text{and}\quad
  \Delta_{\sphere^{n-1}}Y_m=-m(m+n-2)Y_m.
$$
For $n \geq 3$, the set $\Harm^m$ of spherical harmonics of degree $m$ is a finite-dimensional vector subspace of $C^\infty(\sphere^{n-1})$ (and hence of $L^2(\sphere^{n-1})$), with dimension
$$
	\dim \mathscr{H}^m = N(n,m) = \frac{(2m+n-2)\Gamma(n+m-2)}{\Gamma(m+1)\Gamma(n-1)}.
	$$
For $n=2$, $\dim \mathscr{H}^0=1$ and $\dim \mathscr{H}^m=2$ for $m \geq 1$.

Assume the spherical harmonics are real-valued. 
For each $m \geq 0$, we choose an orthonormal basis $\{Y_{m,k}\}_{k=1}^{N(n,m)}$ of $\Harm^m$ with respect to the $L^2$ inner product on $\sphere^{n-1}$ defined by
$$
 (f,g):=\int_{\sphere^{n-1}}f(u)g(u)\dd\Haus^{n-1}(u),\quad  \forall \ f,g\in L^2(\sphere^{n-1}).
$$
Spherical harmonics of different degrees are orthogonal, i.e.,
 $$(f, g) = 0,  \quad  \forall \  f \in \mathscr{H}^k,\  g \in \mathscr{H}^j,\  k \neq j,$$
 and the algebraic direct sum $\bigoplus_{m=0}^{\infty}\Harm^m$ is dense in $L^2(\sphere^{n-1})$. Consequently, the collection
 $$\{Y_{m,k} : m = 0,\ 1,\ 2,\ldots,\ k = 1, \ldots, N(n,m)\}$$ forms a complete orthonormal system of $L^2(\sphere^{n-1})$.

Let $\pi_m:L^2(\sphere^{n-1})\to \Harm^m$ denote the orthogonal projection onto $\Harm^m$. With the chosen orthonormal basis, for every $f\in L^2(\sphere^{n-1})$, it is explicitly given by
 $$
     \pi_m f=\sum_{k=1}^{N(n,m)}(f,Y_{m,k})Y_{m,k} \quad\text{and} \quad f=\sum_{m=0}^\infty \pi_m f \quad \text{in }L^2(\sphere^{n-1}). $$
Since spherical harmonics of odd (resp. even) degrees are odd (resp. even) functions,
it follows that $f$ is odd if and only if $\pi_m f=0$ for each even $m$.

In particular, the space $\Harm^1$ consists precisely of the restrictions to $\sphere^{n-1}$ of all linear functions on $\R^n$.
The coordinate functions $u_1,\dots,u_n$, where $(u_1,\dots,u_n)\in \sphere^{n-1}$, form an orthogonal basis of $\Harm^1$.
Thus, $f\in L^2(\sphere^{n-1})$ has \emph{no degree-one component}, i.e., its orthogonal projection $\pi_1 f=0$, if and only if $f$ is orthogonal to each $u_i$, which holds if and only if 
\begin{equation}\label{eq:no-H1}
  \int_{\sphere^{n-1}}u\,f(u)\dd\Haus^{n-1}(u)=0.
\end{equation}

\subsection{Perturbative estimates}
\ 

The following lemma states that the class of smooth convex bodies with positive Gauss curvature everywhere is stable under sufficiently small $C^2$ perturbation of the radial functions.
\begin{lemma}\label{lem3}
  If $\rho_0\in C^\infty(\sphere^{n-1})$ is a radial function of a smooth convex body with positive Gauss curvature everywhere,
  then there exists $\eps>0$ such that for all
  $\rho\in C^\infty(\sphere^{n-1})$ with $\|\rho-\rho_0\|_{C^2(\sphere^{n-1})}<\eps$,
  the set
  $K_\rho=\{ru:\ 0\leq r\leq \rho(u),\ u\in\sphere^{n-1} \}$
  is also a  smooth convex body with positive Gauss curvature everywhere. 
\end{lemma}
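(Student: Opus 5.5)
The plan is to describe $\partial K_\rho$ as the image of the radial map $F_\rho:\sphere^{n-1}\to\R^n$, $F_\rho(u)=\rho(u)u$, and to express every quantity we need as a continuous function of the $2$-jet of $\rho$. Positivity is then an open condition, and compactness of $\sphere^{n-1}$ upgrades it to a uniform statement.

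First, since $\rho_0$ is continuous and positive on the compact sphere, we have $\rho_0\geq 2\delta>0$. Hence for $\|\rho-\rho_0\|_{C^0}<\delta$ the function $\rho$ is positive and smooth. Then $K_\rho$ is compact and star-shaped with respect to $o$, and $o$ is an interior point. The map $F_\rho$ is a smooth embedding whose image is $\partial K_\rho$: it is injective, and its differential $D F_\rho(u)v=\rho(u)v+(D_v\rho)(u)u$ is injective on $u^\perp$. So $\partial K_\rho$ is a $C^\infty$ hypersurface. The outward unit normal is
$$\nu_\rho(u)=\frac{\rho(u)u-\nabla_{\sphere^{n-1}}\rho(u)}{\sqrt{\rho(u)^2+|\nabla_{\sphere^{n-1}}\rho(u)|^2}},$$
and the second fundamental form in the frame $DF_\rho(e_i)$ is given by the classical formula. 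Up to a positive factor, it equals the matrix
$$\rho^2\delta_{ij}+2\rho_i\rho_j-\rho\rho_{ij}$$
(covariant derivatives on the sphere). All of these expressions are continuous in $(\rho,\nabla\rho,\nabla^2\rho)$.

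Second, positive Gauss curvature of $K_{\rho_0}$ means that the second fundamental form of $\partial K_{\rho_0}$ is definite everywhere. For a closed hypersurface bounding a body, this form must be positive definite, with respect to the inner normal, at the point farthest from $o$. So by connectedness the symmetric matrix $M_{\rho_0}(u)$ above is positive definite for all $u$. By compactness its smallest eigenvalue is at least some $\lambda>0$. The entries of $M_\rho(u)-M_{\rho_0}(u)$ are bounded by $C\|\rho-\rho_0\|_{C^2}$, with $C$ depending only on $\|\rho_0\|_{C^2}$. Hence for $\eps$ small, $M_\rho(u)$ is positive definite for every $u$, and so $\partial K_\rho$ has positive Gauss curvature everywhere.

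Third, we conclude convexity. This is the step needing care, since local positive curvature must be turned into global convexity of $K_\rho$. I would invoke Hadamard's theorem: a compact connected smooth hypersurface in $\R^n$ ($n\geq 2$) with positive Gauss curvature everywhere is the boundary of a convex body (indeed a strictly convex one, as the paper remarks). Since $\partial K_\rho$ is connected, being diffeomorphic to $\sphere^{n-1}$, and bounds the compact region $K_\rho$ with nonempty interior, $K_\rho$ is a smooth convex body with positive Gauss curvature. The main obstacle is only bookkeeping: writing the curvature matrix correctly in terms of the $C^2$ data, so that $C^2$-closeness really controls it uniformly. The choice of $\eps=\min(\delta,\lambda/(2C'))$ then finishes the proof.
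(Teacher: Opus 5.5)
Your proposal is correct and follows the same route as the paper. The paper also writes the second fundamental form of $\partial K_\rho$ via the Guan--Spruck formula $(\rho^2\sigma_{ij}+2\nabla_i\rho\,\nabla_j\rho-\rho\nabla_{ij}\rho)/\sqrt{\rho^2+|\nabla\rho|^2}$, then concludes from its continuous dependence on the $2$-jet of $\rho$ and the compactness of $\sphere^{n-1}$. You additionally spell out steps the paper leaves implicit: positivity of $\rho$, that $F_\rho$ embeds the sphere onto $\partial K_\rho$, and the Hadamard step from a positive definite second fundamental form to global convexity. One small point: for $\rho_0$, definiteness of the form comes from convexity of $K_{\rho_0}$ (semidefiniteness) together with positive Gauss curvature, not from positive Gauss curvature alone.
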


\begin{proof}
This is a standard perturbation argument. See, e.g., Koldobsky~\cite[p.~96]{Koldobsky2005} or Myroshnychenko, Tatarko and Yaskin~\cite[Lemma~4]{MTY}.

  Let $\sigma_{ij}$ be the standard metric on $\sphere^{n-1}$, and let
  $\nabla$ be its Levi--Civita connection. In local coordinates, the second
  fundamental form of $\partial K_\rho$ is
  $$ \mathrm{II}^{\rho}_{ij}=
    \frac{
      \rho^2\sigma_{ij}
      +2\nabla_i\rho\,\nabla_j\rho
      -\rho\nabla_{ij}\rho}{\sqrt{\rho^2+\lvert\nabla\rho\rvert^2} }.$$
  See Guan and Spruck~\cite[\S2, equation (2.3)]{GuanSpruck1993}. 
  
  For $\rho=\rho_0$, $\mathrm{II}^{\rho_0}$ is positive definite since $\rho_0$ is the radial function of a smooth convex body with positive Gauss curvature everywhere. Since $\sphere^{n-1}$ is compact and $\mathrm{II}^{\rho}$ depends continuously on $\rho$ in the $C^2$-topology, $\mathrm{II}^{\rho}$ is positive definite for all $\rho$ with $\|\rho-\rho_0\|_{C^2(\sphere^{n-1})}<\eps$ for sufficiently small $\eps>0$. Thus, $K_\rho$ is a smooth convex body with positive Gauss curvature everywhere.
\end{proof}

\begin{lemma}\label{lem4}
 If $h\in C^\infty(\sphere^{n-1})$ and $\beta>0$, then
  $ \|(1+\eps h)^\beta-1-\beta\eps h\|_{C^2(\sphere^{n-1})} = O(\eps^2),~\text{as }\ \eps\to 0.$
\end{lemma}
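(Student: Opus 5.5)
We prove Lemma \ref{lem4} by a pointwise Taylor expansion with integral remainder, and then control derivatives up to order two using the chain rule. Since $h$ is smooth on the compact manifold $\sphere^{n-1}$, we have $M:=\|h\|_{C^2(\sphere^{n-1})}<\infty$. We restrict to $|\eps|\leq \frac{1}{2M}$ (assuming $M>0$; the case $h\equiv 0$ is trivial). Then $1+\eps h\in[\tfrac12,\tfrac32]$ everywhere, so $(1+\eps h)^\beta$ is well defined and smooth for any real $\beta>0$.

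Set $\phi(t)=(1+t)^\beta-1-\beta t$ for $t\in[-\tfrac12,\tfrac12]$. Then $\phi$ is smooth on this interval with $\phi(0)=\phi'(0)=0$. Taylor's theorem with integral remainder gives $\phi(t)=t^2\psi(t)$, where
$$\psi(t)=\int_0^1(1-s)\phi''(st)\dd s$$
is smooth on $[-\tfrac12,\tfrac12]$. Consequently $\psi$, $\psi'$ and $\psi''$ are bounded there by constants $C_0,C_1,C_2$ depending only on $\beta$. It follows that
$$(1+\eps h)^\beta-1-\beta\eps h=\eps^2\,h^2\,\psi(\eps h).$$

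It therefore suffices to show that $\|h^2\,\psi(\eps h)\|_{C^2}$ is bounded uniformly in $|\eps|\leq \frac1{2M}$. We compute in local coordinates, or equivalently with covariant derivatives, on $\sphere^{n-1}$. The chain rule gives
$$\nabla\bigl(\psi(\eps h)\bigr)=\eps\,\psi'(\eps h)\nabla h,$$
$$\nabla^2\bigl(\psi(\eps h)\bigr)=\eps^2\psi''(\eps h)\,\nabla h\otimes\nabla h+\eps\,\psi'(\eps h)\nabla^2h.$$
Each term is bounded by a constant depending on $M$, $\beta$, and the bound $|\eps|\leq\frac1{2M}$. The product rule then bounds the $C^2$ norm of $h^2\psi(\eps h)$ by a constant $C(M,\beta)$. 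Hence
$$\|(1+\eps h)^\beta-1-\beta\eps h\|_{C^2}\leq C\eps^2,$$
which is the claimed $O(\eps^2)$ estimate.

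No step is a real obstacle. The only points needing care are keeping $1+\eps h$ bounded away from $0$, so that non-integer powers $\beta$ are smooth, and factoring out $\eps^2$ before differentiating. The factorization $\phi(t)=t^2\psi(t)$ with smooth $\psi$ is what makes the derivative bounds uniform in $\eps$.
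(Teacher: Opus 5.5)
Your proof is correct and follows essentially the same route as the paper: write $(1+s)^\beta-1-\beta s=s^2\psi(s)$ with $\psi$ smooth near $0$, and restrict $\eps$ so that $\eps h$ stays in a fixed compact interval. Then bound the $C^2$ norm of $h^2\psi(\eps h)$ uniformly in $\eps$ via the chain and product rules; your explicit integral-remainder formula for $\psi$ just makes the paper's factorization concrete.
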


\begin{proof}
  Let
  $$\varphi(s)=(1+s)^\beta-1-\beta s,\quad s>-1.$$
  Since $\varphi(0) = \varphi'(0)=0$, there exist $\delta>0$ and a smooth function $\psi \in C^\infty(-\delta,\delta)$ such that $\varphi(s)=s^2\psi(s)$ for $|s|<\delta$. Let $M_0=\|h\|_{C^0(\sphere^{n-1})}$. If $M_0=0$, the claim is trivial. Otherwise, choose $\eps_0=\frac{\delta}{2M_0}$. Then for all $|\eps|\leq \eps_0$,  we have $|\eps h(u)|\leq \frac{\delta}{2},$ for  $ u\in \mathbb{S}^{n-1}$. Hence, the function
  $$ (1+\eps h)^\beta-1-\beta\eps h
    =\eps^2 h^2\psi(\eps h)$$
  is well defined on $\mathbb{S}^{n-1}$.

In the following,  we show that $\|h^2\psi(\varepsilon h)\|_{C^2(\mathbb S^{n-1})}$ is bounded uniformly for $|\varepsilon|\le\varepsilon_0$. Let
  $$
    A_j=\sup_{|t|\leq\delta/2}|\psi^{(j)}(t)|,
    \qquad j=0,1,2.
  $$
Then $A_0,A_1,A_2<\infty$ by $\psi \in C^\infty(-\delta,\delta)$. For brevity, write $F_\varepsilon=h^2\psi(\varepsilon h)$. Then
  $$
    |F_\varepsilon|\leq A_0\|h\|_{C^0(\sphere^{n-1})}^2=A_0M_0^2.
  $$
  
  Since
  $$
    \nabla F_\varepsilon
    =
    \bigl(2h\psi(\varepsilon h)
    +\varepsilon h^2\psi'(\varepsilon h)\bigr)\nabla h,
  $$
   $h\in C^\infty(\mathbb S^{n-1})$ and
  $|\varepsilon h|\leq\frac{\delta}{2}$, it follows that $\|\nabla F_\varepsilon\|_{C^0(\sphere^{n-1})}$ is bounded uniformly for $|\varepsilon|\le\varepsilon_0$.
 
 Finally, 
  $$\nabla^2F_\varepsilon
    =
    \bigl(2\psi(\varepsilon h)
    +4\varepsilon h\psi'(\varepsilon h)+\varepsilon^2h^2\psi''(\varepsilon h)\bigr)
    \nabla h\otimes\nabla h+
    \bigl(2h\psi(\varepsilon h)
    +\varepsilon h^2\psi'(\varepsilon h)\bigr) \nabla^2h,$$
 it also follows that $\|\nabla^2F_\varepsilon\|_{C^0(\sphere^{n-1})}$ is bounded uniformly for $|\varepsilon|\le\varepsilon_0$.
 
Hence, there exists $C>0$ independent of $\varepsilon$, such that
  $$
    \|F_\varepsilon\|_{C^2(\mathbb S^{n-1})}\leq C, \quad\text{for all }|\varepsilon|\le\varepsilon_0.
  $$
 Consequently,
  $$ \bigl\|(1+\varepsilon h)^\beta-1-\beta\varepsilon h\bigr\|_{C^2(\mathbb S^{n-1})}
    = |\varepsilon|^2\|F_\varepsilon\|_{C^2(\mathbb S^{n-1})} \leq C|\varepsilon|^2,\quad\text{for all }|\varepsilon|\le\varepsilon_0.$$
This proves the desired estimate.
\end{proof}

\subsection{The planar case}\label{planar}
\ 

For the sake of completeness, we include the proof of the planar case $\mu(2)=3$. The lower bound $\mu(2)\ge 3$ has appeared in, among other places, Bose~\cite{Bose1935ConvexOval}, Ehrhart~\cite{Ehrhart1955Minkowski}, and Myroshnychenko, Tatarko and Yaskin~\cite[Sec.~3]{MTY}; the upper bound $\mu(2)\le 3$ will be proved by an explicit example.
\begin{lemma}
  \label{lem:lower3}
   $\mu(2)\geq 3$.
\end{lemma}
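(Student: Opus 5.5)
We need to prove $\mu(2)\geq 3$: every planar convex body $K$ with $c(K)=o$ has at least three chords through $o$ whose midpoints are $o$. By Lemma~\ref{lemma1}, these chords correspond exactly to critical points of $A_K$ on $\sphere^1$, counted up to the antipodal pairing $u\mapsto -u$; a critical point $u$ gives the chord $K\cap u^\perp$, and $-u$ gives the same chord. So it suffices to show that $A_K$ has at least three antipodal pairs of critical points, i.e.\ at least six critical points on the circle.

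\textbf{Step 1: reduce to zeros of a function on a half-circle.} Parametrize $u=(\cos t,\sin t)$ and write $g(t)=A_K(u(t))$. By Lemma~\ref{lemma1}, $g'(t)=\int_{K\cap u^\perp}\ip{x}{u'(t)}\dd\Haus^1(x)$. The chord $K\cap u^\perp$ is the segment from $-\rho_K(u'(t))\,u'(t)$ to $\rho_K(-u'(t))\,(-u'(t))$ reversed appropriately. Concretely, with $w=u'(t)$, it is $\{sw: -\rho_K(-w)\le s\le \rho_K(w)\}$. Hence
$$g'(t)=\int_{-\rho_K(-w)}^{\rho_K(w)} s\dd s=\tfrac12\bigl(\rho_K(w)^2-\rho_K(-w)^2\bigr).$$
Set $\phi(\theta)=\rho_K(\theta)^2-\rho_K(-\theta)^2$ for $\theta\in\sphere^1$. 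This is a continuous odd function, and we must show that it has at least three antipodal pairs of zeros.

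\textbf{Step 2: use the centroid condition.} The condition $c(K)=o$ gives $\int_{\sphere^1}\theta\,\rho_K(\theta)^3\dd\theta=0$. Replacing $\theta$ by $-\theta$ shows that the odd function $\psi(\theta)=\rho_K(\theta)^3-\rho_K(-\theta)^3$ is orthogonal to $\cos$ and $\sin$. Since $a\mapsto a^{3/2}$ is increasing, $\psi$ has exactly the same sign as $\phi$ at every point.

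\textbf{Step 3: the sign-change argument, which is the main point.} Since $\psi$ is odd, the integral of $\psi$ over the whole circle vanishes, so $\psi$ is orthogonal to $1$, $\cos$ and $\sin$. The function $\psi$ cannot be identically zero without $\phi$ vanishing everywhere, in which case there are infinitely many chords and we are done. So assume $\psi\not\equiv 0$.

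Suppose $\psi$ has at most two antipodal pairs of sign changes on $\sphere^1$. Oddness forces the number of sign changes to be even and to come in antipodal pairs, and one pair is automatic. The possible counts of sign changes are therefore $2$ or $4$, up to the degenerate nonnegative-on-a-half-circle situations. Any $2$ or $4$ points that come in antipodal pairs can be written as the zero set of a first-order trigonometric polynomial $a\cos\theta+b\sin\theta$, which handles the case of $2$ points. For $4$ points forming two antipodal pairs, the zero set is exactly the zero set of a product $\ell_1\ell_2$ of two linear forms, and that product lies in the span of $1$, $\cos 2\theta$ and $\sin 2\theta$. Parity is the key here: $\psi$ is odd, and it is only orthogonal to odd functions that we know. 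So with $4$ sign changes we must instead use the fact that, for an odd function, the sign pattern on a half-circle determines everything. The $4$ sign changes restrict to $2$ sign changes on the half-circle, at $\alpha$ and $\alpha+\pi$, giving the same sign pattern as $\sin(\theta-\alpha)$. Then $\int\psi\sin(\theta-\alpha)\neq 0$, contradicting orthogonality. With $2k=4$ the sign pattern is instead that of $\ell_1\ell_2$, which is even, contradicting oddness; so in fact the odd cases are $2$ or $6$ or more. The case of $2$ sign changes therefore gives the contradiction above, and so $\psi$, and hence $\phi$, has at least $6$ sign changes, i.e.\ at least three antipodal pairs.

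The main obstacle is making this sign-change count rigorous when $\phi$ has zero intervals rather than isolated zeros. I would handle that by counting sign changes via a partition of the circle into arcs on which $\phi$ has constant weak sign, and noting that any zero arc already yields infinitely many chords.
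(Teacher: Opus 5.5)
Your proof is correct and is essentially the paper's argument. In both, the centroid condition makes the odd function $\rho_K(\theta)^3-\rho_K(-\theta)^3$ orthogonal to every $\sin(\theta-\alpha)$, which excludes a single sign change per half-circle, and oddness forces an odd number per half-circle, hence at least three; the paper states the first fact as ``$g$ takes both signs on every open half-circle'' and then locates the zeros directly. Two repairs are needed. First, cut the intermediate claims in Step~3 that you later contradict, such as the claim that four sign changes give the sign pattern of $\sin(\theta-\alpha)$. Second, close the degenerate case by observing that if $\phi$ has three antipodal pairs of zeros you are already done, and otherwise its zero set is finite, so the sign-change count is well defined; a zero arc is not the only way the zero set can be infinite.
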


\begin{proof}
  Let $K\subseteq \R^2$ be a convex body. 
  Without loss of generality, assume $c(K)=o$.
  Write $e_\alpha=(\cos\alpha,\sin\alpha)$ and
$$
g(\alpha)=\rho_K(e_\alpha)^3-\rho_K(e_{\alpha+\pi})^3,\quad \alpha\in \R.
$$
Since the radial function $\rho_K$ is continuous, so is $g$. Moreover, $g(\alpha)=0$ if and only if $c(K\cap e_{\alpha+\pi/2}^\perp)=o$.
Since $c(K)=o$, the identity $g(s+\pi)=-g(s)$ and the integration formula in polar coordinates give, for each $\alpha\in \R$,
\begin{equation}\label{eq:planar-2}
\int_{\alpha}^{\alpha+\pi}g(s)e_s\,ds=\int_0^{2\pi}\rho_K(e_s)^3e_s\,ds=3\int_K x\,dx =o.
\end{equation}
So,
$$
\int_{\alpha}^{\alpha+\pi}g(s)\langle e_s, e_{\alpha+\pi/2}\rangle\,ds=\int_{\alpha}^{\alpha+\pi}g(s)\sin(s-\alpha)\,ds=0.
$$
Since $\sin(s-\alpha)>0$ in $(\alpha,\alpha+\pi)$ and $g$ is continuous, it follows that for each $\alpha\in\R$, either $g$ vanishes identically on $[\alpha,\alpha+\pi]$, or $g$ takes both positive and negative values in $(\alpha,\alpha+\pi)$. If $g$ vanishes identically on  some $[\alpha,\alpha+\pi]$, then the identity $g(s+\pi)=-g(s)$ implies that $g$ vanishes identically on $\R$. So, either $g\equiv0$ on $\R$, or $g$ changes sign in $(\alpha,\alpha+\pi)$ for every $\alpha\in\R$.

If $g\equiv0$, then $c(K\cap e_{\alpha+\pi/2}^\perp)=o$ for every $\alpha$. Otherwise, fix $s\in\R$ such that $g(s)\neq 0$. 

Since $g(s+\pi)=-g(s)$,  there exists $\theta_1\in (s,s+\pi)$ with 
$g(\theta_1)=0$.
Since $g$ changes sign in $(\alpha,\alpha+\pi)$ for each $\alpha\in\R$,  there exists $\theta_2\in (\theta_1,\theta_1+\pi)$ with $g(\theta_2)=0$.
Assume $\theta_2$ is the only zero of $g$ in $(\theta_1,\theta_1+\pi)$. 
Without loss of generality, assume $g>0$ in $(\theta_1,\theta_2)$ and $g<0$ in $(\theta_2,\theta_1+\pi)$.
By $g(s+\pi)=-g(s)$, it follows that $g<0$ in $(\theta_1+\pi,\theta_2+\pi)$, and therefore $g\le 0$ in $(\theta_2,\theta_2+\pi)$,
contradicting that $g$ changes sign in $(\alpha,\alpha+\pi)$ for each $\alpha\in\R$.
So, there exists $\theta_3\in (\theta_1,\theta_1+\pi)\setminus\{\theta_2\}$ with $g(\theta_3)=0$. 
Hence, 
$$c(K\cap e_{\theta_1+\pi/2}^\perp)=c(K\cap e_{\theta_2+\pi/2}^\perp)=c(K\cap e_{\theta_3+\pi/2}^\perp)=o,$$
which implies that $\mu(2)\ge 3$.
\end{proof}

\begin{lemma}\label{lem:planar-upper}
   $\mu(2)\leq 3$.
\end{lemma}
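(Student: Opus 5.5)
We will exhibit an explicit planar convex body $K$ with exactly three chords through $c(K)$ having midpoint $c(K)$. The natural candidate is a triangle, say $K=\operatorname{conv}\{a_1,a_2,a_3\}$ with centroid $o=\frac13(a_1+a_2+a_3)$. As in the proof of Lemma~\ref{lem:lower3}, a chord $K\cap e_{\alpha+\pi/2}^\perp$ has midpoint $o$ exactly when $g(\alpha)=\rho_K(e_\alpha)^3-\rho_K(e_{\alpha+\pi})^3=0$, i.e.\ when $\rho_K(e_\alpha)=\rho_K(-e_\alpha)$. So it suffices to count the lines $\ell$ through $o$ such that $o$ is the midpoint of $K\cap\ell$, and to show there are exactly three.

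The three medians are such lines. The median through $a_1$ meets the opposite side at $m_1=\frac12(a_2+a_3)=-\frac12 a_1$. Hence its chord is $[a_1,m_1]$, with midpoint $\frac14 a_1\neq o$. So the medians do not work. The correct candidates are instead the lines through $o$ parallel to the sides. The line through $o$ parallel to $a_2a_3$ meets the sides $a_1a_2$ and $a_1a_3$ at the points $a_1+\frac23(a_2-a_1)$ and $a_1+\frac23(a_3-a_1)$. Their midpoint is $a_1+\frac13(a_2+a_3-2a_1)=o$. This gives three good chords, one for each side.

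To show there are no others, I would apply an affine map to reduce to a fixed triangle, since midpoints, centroids and lines through $o$ are all affine-equivariant. Take the triangle with vertices $(0,0),(1,0),(0,1)$ translated so that its centroid is the origin. A line through $o$ that avoids the vertices meets two sides, and its two endpoints lie on two sides of the triangle. Fix a pair of sides, say those through the vertex $a_1$. Parametrize the endpoints as $p=a_1+s(a_2-a_1)$ and $q=a_1+t(a_3-a_1)$. The condition $p+q=0$ gives $2a_1+s(a_2-a_1)+t(a_3-a_1)=0$. Since $a_2-a_1$ and $a_3-a_1$ are linearly independent, this forces unique values of $s$ and $t$, namely $s=t=\frac23$, using $a_2+a_3=-a_1$. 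So each vertex contributes at most one line, which is the one found above. A line through a vertex is a median, and those were already excluded. Thus there are exactly three lines.

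The only delicate point is checking that the case analysis is exhaustive: every line through the interior point $o$ either passes through a vertex or crosses exactly two sides in their relative interiors, and those two sides share a vertex. This follows from convexity of the triangle. Since a triangle is a convex body with exactly three admissible sections, we conclude $\mu(2)\le 3$.
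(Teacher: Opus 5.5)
Your proof is correct and uses the same counterexample as the paper: a triangle (the paper takes it equilateral, which loses nothing by affine invariance), whose only admissible chords are the three through $o$ parallel to the sides. The difference is only in how the count is done. The paper writes $\rho_S(u)=1/\max_j\langle u,v_j\rangle$ and uses $v_0+v_1+v_2=0$, so that $\max_j\langle u,v_j\rangle=-\min_j\langle u,v_j\rangle$ forces $\langle u,v_j\rangle=0$ for some $j$. You instead parametrize the chord endpoints on each pair of sides and use the linear independence of $a_2-a_1$ and $a_3-a_1$ to get at most one solution per vertex.
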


\begin{proof}
  Let $S\subseteq \R^2$ be the equilateral triangle given by 
  $$
    S=\bigcap_{j=0}^2 \{x\in\R^2:\ip{x}{v_j}\leq 1\},
  $$
  where $v_j=\left(\cos\frac{2\pi j}{3},\sin\frac{2\pi j}{3}\right)$,
  $j=0,1,2$.
  Then its centroid  $c(S)=o$, and its radial function
  $$
    \rho_S(u)=\frac{1}{\max_j\ip{u}{v_j}},\quad \forall \  u\in \sphere^1. $$
Note that the chord $S\cap \operatorname{span}\{u\}$ of $S$ has its centroid at the origin $o$ if and only if
 $\rho_S(u)=\rho_S(-u)$, which holds if and only if
$$
  \max_j\ip{u}{v_j} = \max_j\ip{-u}{v_j} = - \min_j\ip{u}{v_j}.$$
Let $M=\max_j\ip{u}{v_j}$ and $m=\min_j\ip{u}{v_j}.$ Then
  $M=-m.$  
  
  If $M=0,$ then all the  $\ip{u}{v_j}=0,$ which is impossible for $u
	\neq 0$. Thus  $M>0,$ and $m<0.$
  Since $v_0+v_1+v_2=0$, it follows that 
  $ \ip{u}{v_0}+\ip{u}{v_1}+\ip{u}{v_2}=0$ for each $u\in \sphere^1,$ and therefore 
  $\ip{u}{v_j}=0$ for some $j \in \{0,1,2\}$. So, the direction  $u$ is parallel to the side of $S$ corresponding to $v_j$. Since $S$ has exactly three side directions, there are at most \emph{three} distinct  directions $u$ for which this can happen.  Therefore,  $\mu(2)\leq 3$.
\end{proof}

\vskip 15pt
\section{\bf Proof of main results}
\label{sec:main}
\vskip 5pt
To prove the main result, we first   prove that the hemispherical transform $\Hem$ is an \emph{isomorphism} on the space $C^\infty_{\mathrm{odd}}(\sphere^{n-1})$ of \emph{smooth odd functions} on $\sphere^{n-1}$, and   is a \emph{bounded} operator on $C^k(\sphere^{n-1})$ for each nonnegative integer $k$. Then, we prove that Morse functions are stable under $C^2$-perturbations and, in addition, that the number of critical points is stable as well. Finally, these results lead to the proof of Theorem~\ref{thm1}.

\vskip3pt
Write $C^\infty_{\mathrm{odd}}(\sphere^{n-1})$ for the space of smooth odd functions
$f:\sphere^{n-1}\to\R$, and write $C^\infty_{\mathrm{even}}(\sphere^{n-1})$ analogously.
Then $C^\infty(\sphere^{n-1})$ admits the direct sum decomposition
$$
  C^\infty(\sphere^{n-1})=C^\infty_{\mathrm{odd}}(\sphere^{n-1})\oplus C^\infty_{\mathrm{even}}(\sphere^{n-1}).$$

Recall that the \emph{hemispherical transform} of $f\in C(\sphere^{n-1})$ is 
$$ \Hem f(u)=
  \int_{\sphere^{n-1}}
  \mathbbm 1_{\{ \ip{\theta}{u}\geq 0\}} f(\theta)\dd\Haus^{n-1}(\theta),\quad  \forall \ u\in \sphere^{n-1}.$$
  The action of $\Hem$ on even functions is very simple. Indeed,
  if $f$ is even, the change of variables 
		$\theta\mapsto -\theta$, together with $\dd\Haus^{n-1}(-\theta)=\dd\Haus^{n-1}(\theta)$, gives
$$ \Hem f(u)=\frac{1}{2}\int_{\sphere^{n-1}}f(\theta)\dd\Haus^{n-1}(\theta),\quad  \forall \ u\in \sphere^{n-1},
$$
which is constant. In what follows, we concentrate on the action of $\Hem$ on $C^\infty_{\mathrm{odd}}(\sphere^{n-1})$.

\begin{lemma}\label{lem14}
 $\Hem:C^\infty_{\mathrm{odd}}(\sphere^{n-1})
    \longrightarrow
    C^\infty_{\mathrm{odd}}(\sphere^{n-1})$
  is a linear isomorphism for all $n\geq 3$.
  Moreover, $f\in C^\infty_{\mathrm{odd}}(\sphere^{n-1})$ has no degree-one spherical harmonic component if and only if  $\Hem f$ has none.
\end{lemma}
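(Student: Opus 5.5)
The plan is to diagonalize $\Hem$ with the Funk--Hecke theorem. The kernel $\mathbbm 1_{\{\ip{\theta}{u}\geq 0\}}$ depends only on $t=\ip{\theta}{u}$; it equals $\phi(t)=\mathbbm 1_{[0,1]}(t)$, which is bounded. By Funk--Hecke, for every $Y_m\in\Harm^m$ we get $\Hem Y_m=\lambda_m Y_m$ with
$$
\lambda_m=\omega_{n-2}\int_{-1}^1 \mathbbm 1_{[0,1]}(t)\,P_m^{n}(t)\,(1-t^2)^{\frac{n-3}{2}}\dd t ,
$$
where $P_m^n$ is the Legendre (Gegenbauer) polynomial of dimension $n$ normalized by $P_m^n(1)=1$, and $\omega_{n-2}=\Haus^{n-2}(\sphere^{n-2})$. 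Writing $\mathbbm 1_{[0,1]}=\tfrac12+\tfrac12\operatorname{sgn}$ shows that for odd $m$ only the sign part contributes, so
$$
\lambda_m=\omega_{n-2}\int_0^1 P_m^n(t)(1-t^2)^{\frac{n-3}{2}}\dd t .
$$
Via the Rodrigues formula this is explicitly computable. One obtains the classical values of Funk/Rubin: for odd $m=2k+1$, $\lambda_m$ is, up to a nonzero constant and a sign $(-1)^k$, a ratio of Gamma functions, roughly $\Gamma(k+\tfrac12)\Gamma(\tfrac{n-1}{2})/\bigl(\Gamma(k+\tfrac{n+1}{2})\cdots\bigr)$. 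The key outputs are:

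\begin{enumerate}[label=(\roman*)]
\item $\lambda_m\neq 0$ for every odd $m$;
\item $|\lambda_m|\asymp m^{-n/2}$ as $m\to\infty$, i.e.\ polynomial decay with matching upper and lower bounds.
\end{enumerate}

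I would derive these either by direct integration with Rodrigues' formula (one integration by parts reduces $\int_0^1$ to a boundary term of a derivative of $(1-t^2)^{m+\frac{n-3}{2}}$ at $t=0$, which is a clean closed form) or by quoting Rubin.

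Given (i) and (ii), the isomorphism follows from the standard characterization of $C^\infty(\sphere^{n-1})$ by rapid decay of harmonic components. A continuous $f$ is smooth iff $\|\pi_m f\|_{L^2}=O(m^{-N})$ for all $N$; this uses $\Delta_{\sphere^{n-1}}$ and Sobolev embedding. For odd smooth $f=\sum_{m \text{ odd}}\pi_m f$ we have $\Hem f=\sum\lambda_m\pi_m f$. This is odd, since each term is. It is smooth, since $|\lambda_m|\le C$. Injectivity holds since all $\lambda_m\ne0$ and $\Hem$ commutes with $\pi_m$; the latter follows from self-adjointness/rotation invariance, or simply because $\pi_m\Hem f=\lambda_m\pi_m f$, as $\Hem$ is a bounded operator on $L^2$. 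For surjectivity, given odd smooth $g$, set $f=\sum_{m\text{ odd}}\lambda_m^{-1}\pi_m g$. The growth $|\lambda_m^{-1}|\le Cm^{n/2}$ is only polynomial, so the coefficients still decay rapidly and $f$ is smooth and odd, with $\Hem f=g$. Linearity is obvious.

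The degree-one statement is then immediate: $\pi_1\Hem f=\lambda_1\pi_1 f$, and $\lambda_1\neq0$. In fact $\lambda_1=\omega_{n-2}\int_0^1 t(1-t^2)^{\frac{n-3}{2}}\dd t=\omega_{n-2}/(n-1)>0$, which can also be checked directly by computing $\Hem$ of a linear function. So $\pi_1 f=0$ iff $\pi_1\Hem f=0$.

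The main obstacle is step (i)–(ii): computing $\lambda_m$ for odd $m$ precisely enough to see that it never vanishes and to get a polynomial lower bound. I would try the Rodrigues-formula route first. With $w(t)=(1-t^2)^{\frac{n-3}{2}}$, we have $P_m^n w=c_{m,n}\frac{d^m}{dt^m}(1-t^2)^{m+\frac{n-3}{2}}$. Integrating over $[0,1]$ gives $-c_{m,n}\frac{d^{m-1}}{dt^{m-1}}(1-t^2)^{m+\frac{n-3}{2}}\big|_{t=0}$, since the endpoint $t=1$ vanishes. For odd $m$, $m-1=2k$ is even, so this is $(2k)!$ times the binomial coefficient of $t^{2k}$, namely $(-1)^k\binom{m+\frac{n-3}{2}}{k}$, which is nonzero. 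Stirling's formula then gives the asymptotics.
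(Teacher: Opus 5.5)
Your proposal is correct and follows essentially the same route as the paper. Both diagonalize $\Hem$ on odd spherical harmonics, invert using $\lambda_m\neq 0$ and $|\lambda_m|^{-1}=O(m^{n/2})$ together with the Sobolev (rapid coefficient decay) characterization of smoothness, and read off the degree-one claim from $\pi_1\Hem f=\lambda_1\pi_1 f$. The only differences are that you derive the eigenvalues yourself via Funk--Hecke and Rodrigues' formula, which the paper cites from Rubin; your closed form simplifies exactly to $\lambda_m=(-1)^{(m-1)/2}\pi^{(n-2)/2}\Gamma(m/2)/\Gamma((m+n)/2)$. You also check explicitly, through boundedness of the $\lambda_m$ and $L^2$-boundedness of $\Hem$, that $\Hem$ maps smooth odd functions to smooth odd functions, a point the paper leaves implicit.
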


\begin{proof}
In \cite[(2.10) and (2.14)]{Rubin}, Rubin showed that for each odd integer $m \geq 1$
and each  spherical harmonic $Y_{m,k}$ of degree $m,$
  $\Hem Y_{m,k}=\lambda_m Y_{m,k},$
  with eigenvalues
  $$\lambda_m=\pi^{(n-2)/2}(-1)^{(m-1)/2}
    \frac{\Gamma(m/2)}{\Gamma((m+n)/2)}.
  $$
 Since $\lambda_m\neq 0$ for all odd $m$,   $\Hem$ is \emph{injective} on $C^\infty_{\mathrm{odd}}(\sphere^{n-1}).$  
  Indeed, assume  $f \in C^\infty_{\mathrm{odd}}(\sphere^{n-1})$ and  $$f=\sum_{\substack{ m\ \mathrm{odd}}}\sum_{k=1}^{N(n,m)} c_{m,k} Y_{m,k}.$$  If $\Hem f=0,$ then 
 $$\sum_{\substack{ m\ \mathrm{odd}}}\sum_{k=1}^{N(n,m)} c_{m,k} \lambda_mY_{m,k}=0,$$
 which forces $c_{m,k}=0$ for all $m,k$ by the linear independence of $Y_{m,k}.$ Hence, $f=0.$
 
 Next,  we show that  $\Hem$ is \emph{surjective} on $C^\infty_{\mathrm{odd}}(\sphere^{n-1}).$ 
   For any $g\in C^\infty_{\mathrm{odd}}(\sphere^{n-1})$, write its spherical harmonic expansion (only odd degrees contribute) as 
  $$ g=\sum_{\substack{ m\ \mathrm{odd}}}\sum_{k=1}^{N(n,m)} g_{m,k} Y_{m,k}.
  $$
  Define
  $$f:=
    \sum_{\substack{ m\ \mathrm{odd}}}\sum_{k=1}^{N(n,m)}
    \lambda_m^{-1} g_{m,k} Y_{m,k},$$
   where $\lambda_m$ is as above and nonzero for all odd $m$. Since only odd-degree spherical harmonics appear in its expansion, 
$f$ is odd. Next, We aim to show that $f$ is smooth. If so, then $f$ lies in $ C^\infty_{\mathrm{odd}}(\sphere^{n-1})$ and   immediately gives that  $\Hem f = g.$

Recall that on $\sphere^{n-1}$ the Sobolev norm $H^s$ for $s \geq 0$ can be characterized via spherical harmonic coefficients (see Atkinson and Han~\cite[(3.98)]{AtkinsonHan2012}) by
 $$\|g\|_{H^s}^2 =
    \sum_{m,k} \big(m+\frac{n-2}{2}\big)^{2s}|g_{m,k}|^2.
  $$
 Since $g$ is smooth, it lies in the Sobolev space $H^s(\sphere^{n-1})$. So the  sum is finite for every $s \geq 0.$

  Applying the Stirling formula to the eigenvalues $\lambda_m$, we obtain the asymptotic estimate
$$|\lambda_m|^{-2}=O(m^n), \quad \text{as}\ m \rightarrow \infty,\ m \ \text{odd},$$
  and hence 
$$|\lambda_m|^{-2}=O((m+\frac{n-2}{2})^{n}), \quad \text{as}\ m \rightarrow \infty,\ m \ \text{odd}.$$
 So for every $s \geq 0$,
 \begin{align*}
    \|f\|_{H^s}^2
    =&
    \sum_{m,k} \big(m+\frac{n-2}{2}\big)^{2s}|\lambda_m|^{-2}|g_{m,k}|^2 \\
    =& \,O\big(\sum_{m,k} \big(m+\frac{n-2}{2}\big)^{2s+n}|g_{m,k}|^2 \big)\\
    = &\, O\big(\|g\|_{H^{s+n/2}(\sphere^{n-1})}^2\big)<+\infty,
  \end{align*}
 which implies that  $f \in  H^s(\sphere^{n-1})$ for every $s \geq 0$. By the Sobolev embedding theorem on compact manifolds, we obtain
  $$\bigcap_{s\ge0}H^s(\sphere^{n-1}) = C^\infty(\sphere^{n-1}).$$
  Therefore,  $f\in C^\infty_{\mathrm{odd}}(\sphere^{n-1})$. 

 Since $\Hem Y_{m,k}=\lambda_m Y_{m,k}$, it follows that $\Hem$ is diagonalized by spherical harmonics.
 In particular, $\Hem$ acts on $\Harm^1$ as multiplication by   $\lambda_1 \neq 0$. So, $\pi_1 (\Hem f)=\Hem (\pi_1 f)=\lambda_1\pi_1 (f),$ which implies that
  $\Hem f$ has no degree-one component if and only if $f$ has no degree-one component.
\end{proof}

To prove  the  transform $\Hem$ is a bounded operator on  $C^k(\sphere^{n-1})$ for every $k$,  we prove the   boundedness of convolution on compact Lie groups, and lift $\Hem$ to a convolution operator.

\begin{lemma}\label{lem9}
Let $G$ be a compact Lie group and $\nu$ be a finite positive Borel measure on $G$. For each integer $k\geq 0$, define the operator $\mathcal{T}_\nu$ on $C^k(G)$ by
  $$(\mathcal{T}_\nu f)(\xi)=\int_G f(\xi\eta)\dd\nu(\eta), \quad \text{for}\  f\in C^k(G), \ \xi\in G.$$
  Then $\mathcal{T}_\nu:C^k(G)\to C^k(G)$ is bounded.
\end{lemma}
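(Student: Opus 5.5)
The plan is to differentiate under the integral sign using left-invariant vector fields and then to control everything by the total mass $\nu(G)$. Fix a basis $X_1,\dots,X_d$ of the Lie algebra $\mathfrak g$ of $G$, viewed as left-invariant vector fields, so that
$$(X_if)(\xi)=\frac{d}{dt}\Big|_{t=0} f(\xi\exp(tX_i)).$$
Since $G$ is compact, these fields form a global frame of $TG$. We define the $C^k$ norm by
$$\|f\|_{C^k}=\max_{|\alpha|\le k}\sup_G|X^\alpha f|,$$
where $X^\alpha=X_{\alpha_1}\cdots X_{\alpha_j}$ ranges over all words of length $j\le k$. This is equivalent to any other $C^k$ norm defined via charts or a Riemannian connection: on a compact manifold, any two such families of defining derivatives control each other with bounded coefficients.

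The difficulty is that a left-invariant field acting on $\xi\mapsto f(\xi\eta)$ does not produce a left-invariant field evaluated at $\xi\eta$. The key identity is
$$\frac{d}{dt}\Big|_{t=0} f(\xi\exp(tX)\eta)=\frac{d}{dt}\Big|_{t=0} f\bigl(\xi\eta\exp(t\,\mathrm{Ad}_{\eta^{-1}}X)\bigr)=\bigl((\mathrm{Ad}_{\eta^{-1}}X)f\bigr)(\xi\eta).$$
Writing $\mathrm{Ad}_{\eta^{-1}}X_i=\sum_j a_{ij}(\eta)X_j$, the coefficients $a_{ij}$ are continuous on the compact group $G$, hence bounded by some constant $A$. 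Iterating gives
$$X^\alpha[f(\cdot\,\eta)](\xi)=\sum_{\beta}a_{\alpha\beta}(\eta)\,(X^\beta f)(\xi\eta),$$
where $\beta$ runs over words of the same length $|\alpha|$ and $|a_{\alpha\beta}|\le A^{|\alpha|}$. Consequently
$$\sup_\xi\bigl|X^\alpha[f(\cdot\,\eta)](\xi)\bigr|\le d^{|\alpha|}A^{|\alpha|}\|f\|_{C^k}$$
uniformly in $\eta$.

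Next, differentiation commutes with integration. For fixed $\xi$, the difference quotients of $f(\xi\exp(tX)\eta)$ converge as $t\to0$ uniformly in $\eta$, because the derivative of $f$ is uniformly continuous on the compact group $G$. Since $\nu$ is finite, we may pass the limit under the integral; induction on the word length handles the higher derivatives. The same uniform continuity argument shows that $\xi\mapsto\int_G (X^\beta f)(\xi\eta)\dd\nu(\eta)$ is continuous, so $\mathcal T_\nu f\in C^k(G)$. Putting the pieces together,
$$\|\mathcal T_\nu f\|_{C^k}\le (dA)^k\,\nu(G)\,\|f\|_{C^k}.$$
Linearity is clear.

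The main obstacle is the noncommutativity in the middle step: the frame of left-invariant fields is not preserved by right translation. The adjoint-representation identity resolves this, and compactness of $G$ supplies the uniform bound $A$ on its coefficients. Everything else is routine dominated-convergence bookkeeping.
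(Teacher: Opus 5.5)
Your proof is correct, and its skeleton (differentiate under the integral along a global frame of invariant vector fields, then bound by $\nu(G)$) is the same as the paper's. The one real difference is the choice of frame.

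The paper uses right-invariant fields, $X^R f(\xi)=\frac{d}{dt}\big|_{t=0}f(\exp(tX)\xi)$. Their flows are left translations, so they commute with the right translations $f\mapsto f(\cdot\,\eta)$. Hence $X_I^R[f(\cdot\,\eta)]=(X_I^R f)(\cdot\,\eta)$ exactly, so $X_I^R(\mathcal{T}_\nu f)=\mathcal{T}_\nu(X_I^R f)$, and $\|\mathcal{T}_\nu f\|_{C^k}\le\nu(G)\|f\|_{C^k}$ follows at once.

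By choosing left-invariant fields you create the noncommutativity you call the main obstacle. Your $\mathrm{Ad}_{\eta^{-1}}$ identity resolves it correctly, and the only cost is the constant $(dA)^k$. Note that $A\ge 1$ because $\mathrm{Ad}_e=\mathrm{id}$, so replacing $(dA)^{|\alpha|}$ by $(dA)^k$ is legitimate. Also, with an $\mathrm{Ad}$-invariant inner product on $\mathfrak g$ (which exists since $G$ is compact) and an orthonormal basis, you may take $A=1$. What your version buys is the slightly more general fact that right translations are uniformly bounded on $C^k(G)$ even in the left-invariant frame norm. For this lemma, though, the right-invariant frame makes the argument shorter and the constant sharper.

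One small slip: the derivative you must show is continuous, and then differentiate again in the induction, is $\xi\mapsto\int_G\sum_\beta a_{\alpha\beta}(\eta)(X^\beta f)(\xi\eta)\dd\nu(\eta)$. These are integrals against the bounded signed measures $a_{\alpha\beta}\nu$, not $\int_G(X^\beta f)(\xi\eta)\dd\nu(\eta)$. The same uniform-continuity and dominated-convergence reasoning applies verbatim, so nothing breaks.
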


\begin{proof}
  Choose a basis $X_1,\dots,X_N$ of the Lie algebra of $G$, and let $X_1^R,\dots,X_N^R$
  be the corresponding right-invariant vector fields.  Since $f \in C^k(G)$ and the right-invariant vector fields act as differential operators of order $|I|$, the map $(\xi,\eta)\mapsto X_I^Rf(\xi\eta)$
  is continuous on the compact space $G\times G$ 
  for every multi-index $I$ with $|I|\leq k$. Since $\nu$ is a finite measure, the Lebesgue dominated convergence theorem  gives
  that for all $\xi \in G,$
  $$ X_I^R(\mathcal{T}_\nu f)(\xi)=\int_G X_I^R f(\xi\eta)\dd\nu(\eta).$$
 Since  $G$  is compact and the integral is continuous in $\xi,$ it follows that $X_I^R(\mathcal{T}_\nu f) \in C^0(G)$ for all $|I|\leq k$, which proves that  $\mathcal{T}_\nu f \in C^k(G).$
 
 Since the right translation $\xi \mapsto \xi\eta$ is a diffeomorphism of $G$ onto itself, it follows that 
 $$
	\sup_{\xi \in G} |X_I^Rf(\xi\eta)|=\sup_{g \in G} |X_I^Rf(g)|=\|X_I^R f\|_{C^0(G)}.
$$
 Consequently,
  $$\|X_I^R(\mathcal{T}_\nu f)\|_{C^0(G)} \leq
    \nu(G)\|X_I^R f\|_{C^0(G)}.$$
  
 Finally, since $\{X_1^R,\dots,X_N^R\}$
 forms a global frame of the tangent bundle, the standard $C^k$ norm on $G$ is equivalent to $
	{\sum }_{|I|\leq k} \|X_I^R(\cdot) \|_{C^0(G)}.$
Summing  the  estimates over all  $|I|\leq k$ yields
  $$
    \|\mathcal{T}_\nu f\|_{C^k(G)}\leq \nu(G)\|f\|_{C^k(G)}.
  $$
  Thus, $\mathcal{T}_\nu:C^k(G)\to C^k(G)$ is bounded.
\end{proof}

\begin{lemma}\label{lem1}
  For every integer $k\geq 0$, there exists a constant $C=C(n,k)>0$ such that
  $$
    \|\Hem f\|_{C^k(\sphere^{n-1})}
    \leq
    C\|f\|_{C^k(\sphere^{n-1})}, \quad \forall \ 
  f\in C^k(\sphere^{n-1}).
  $$
\end{lemma}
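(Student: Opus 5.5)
The plan is to lift $\Hem$ to a convolution-type operator on the compact Lie group $G=SO(n)$ and then invoke Lemma~\ref{lem9}. Fix the north pole $e=e_n\in\sphere^{n-1}$ and the projection $p:SO(n)\to\sphere^{n-1}$, $p(\xi)=\xi e$. This is a smooth surjective submersion: it is the quotient map $SO(n)\to SO(n)/SO(n-1)$. For $f\in C^k(\sphere^{n-1})$, set $\tilde f=f\circ p\in C^k(SO(n))$. Since $p$ is smooth and fixed, $\|\tilde f\|_{C^k(SO(n))}\le C_1\|f\|_{C^k(\sphere^{n-1})}$.

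The key step is to write $\Hem f\circ p$ as $\mathcal T_\nu\tilde f$ for a suitable finite positive Borel measure $\nu$ on $SO(n)$. Let $\sigma$ be the normalized Haar measure on $SO(n)$. The pushforward of $\sigma$ under $\eta\mapsto\eta e$ is $\Haus^{n-1}/|\sphere^{n-1}|$. Define $d\nu(\eta)=|\sphere^{n-1}|\,\mathbbm 1_{\{\ip{\eta e}{e}\ge0\}}\,d\sigma(\eta)$; this is finite and positive. Then, for $\xi\in SO(n)$,
$$
\mathcal T_\nu\tilde f(\xi)=|\sphere^{n-1}|\int_{SO(n)}\mathbbm 1_{\{\ip{\eta e}{e}\ge0\}}f(\xi\eta e)\dd\sigma(\eta).
$$
Substituting $\theta=\xi\eta e$ and using $\ip{\eta e}{e}=\ip{\xi\eta e}{\xi e}$ together with the rotation invariance of $\Haus^{n-1}$, the right-hand side becomes $\int_{\sphere^{n-1}}\mathbbm 1_{\{\ip{\theta}{\xi e}\ge0\}}f(\theta)\dd\Haus^{n-1}(\theta)=\Hem f(\xi e)$. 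Hence $\Hem f\circ p=\mathcal T_\nu\tilde f$. Lemma~\ref{lem9} now gives $\|\Hem f\circ p\|_{C^k(SO(n))}\le\nu(G)\,C_1\|f\|_{C^k}$.

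The main technical point is to descend from the group back to the sphere, i.e.\ to show $\|g\|_{C^k(\sphere^{n-1})}\le C_2\|g\circ p\|_{C^k(SO(n))}$ for $g\in C^k(\sphere^{n-1})$. I would use local smooth sections. Around each point of $\sphere^{n-1}$ there is a neighborhood $U$ and a smooth map $s:U\to SO(n)$ with $p\circ s=\mathrm{id}$; this follows from the submersion property or from the explicit rotation taking $e$ to $u$ in the plane spanned by $e$ and $u$. Then $g|_U=(g\circ p)\circ s$, so the chain rule bounds the $C^k$ norm of $g$ on $U$ by a constant times $\|g\circ p\|_{C^k}$. Covering the compact sphere by finitely many such $U$ gives $C_2$.

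A remaining subtlety is that a priori we do not know $\Hem f\in C^k$. However, Lemma~\ref{lem9} already shows $\mathcal T_\nu\tilde f\in C^k(G)$, and composing with the smooth sections $s$ shows $\Hem f$ is $C^k$ on each $U$. Combining the three estimates gives $C(n,k)=C_2\,\nu(G)\,C_1$, where $\nu(G)=|\sphere^{n-1}|/2$.
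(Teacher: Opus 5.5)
Your proposal is correct and is essentially the paper's proof. The paper uses the same lift to $\mathrm{SO}(n)$ via $\xi\mapsto\xi e_n$, the same measure $\nu$ (Haar measure restricted to the preimage of the upper hemisphere and scaled by $\Haus^{n-1}(\sphere^{n-1})$), the same identity $\widetilde{\Hem f}=\mathcal{T}_\nu\widetilde f$, and the same appeal to Lemma~\ref{lem9}. Your local-section argument for descending the $C^k$ bound, including the a priori check that $\Hem f\in C^k$, spells out a norm equivalence that the paper only asserts from $\pi$ being a submersion with compact fibre.
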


\begin{proof}
Let $G=\operatorname{SO}(n)$ and let $\pi:G\to \sphere^{n-1}$ be given by $\pi(\xi)=\xi e_n$, where $e_n=(0,0,\ldots, 1)$. For 
$f\in C^k(\sphere^{n-1})$, define its pullback $\widetilde f=f\circ \pi \in C^k(G)$. Let
$
E=\{\theta\in\sphere^{n-1}:\ip{\theta}{e_n}\geq 0\}.
$

First, we construct a finite positive Borel measure $\nu$ on $G$
 such that its pushforward satisfies
$\pi_*\nu=\Haus^{n-1}|_E$. Indeed, 
let $m_G$ be the  Haar probability measure on $G$.
Since $\pi(\xi)=\xi e_n$ identifies $\sphere^{n-1}$ with
$G/\mathrm{SO}(n-1)$, the pushforward $\pi_*m_G$ is precisely the normalized
rotation-invariant measure on $\sphere^{n-1}$, that is,
$$
  \pi_*m_G=\frac{1}{\Haus^{n-1}(\sphere^{n-1})}\Haus^{n-1}.
$$
For any Borel set $B\subset G,$ define 
$$\nu(B) =
  \Haus^{n-1}(\sphere^{n-1})\,
  m_G\bigl(B\cap \pi^{-1}(E)\bigr).
$$
Then $\nu(G)=\frac{1}{2}\Haus^{n-1}(\sphere^{n-1})<\infty,$  and  for any Borel set $A\subset\sphere^{n-1}$,
$$
  \pi_*\nu(A)=
  \Haus^{n-1}(\sphere^{n-1})m_G(\pi^{-1}(A\cap E))
  =\Haus^{n-1}(A\cap E).$$
That is, $\pi_*\nu=\Haus^{n-1}|_E$. 
By the $\mathrm{SO}(n)$-invariance of the spherical measure,  we have

\begin{align*}
\widetilde{\Hem f}(\xi)
&=
\Hem f(\xi e_n)
=
\int_{\xi E} f(\theta)\dd\Haus^{n-1}(\theta)\\
& =
\int_E f(\xi\theta)\dd\Haus^{n-1}(\theta)=
\int_G f(\xi\eta e_n)\dd\nu(\eta) \\
& =
\int_G \widetilde f(\xi \eta)\dd\nu(\eta)
=
\mathcal{T}_\nu \widetilde f(\xi), \quad \ \forall \ \xi\in G.
\end{align*}
Since $\pi:G\to\sphere^{n-1}$ is a smooth submersion with compact fibre $\mathrm{SO}(n-1)$, there exists $\delta>0$ such that for every $f\in C^k(\sphere^{n-1})$,
$$
\delta\|\widetilde f\|_{C^k(G)}
\leq
\|f\|_{C^k(\sphere^{n-1})}
\leq
\delta^{-1}\|\widetilde f\|_{C^k(G)}.
$$
By Lemma~\ref{lem9}, the operator $\mathcal{T}_\nu$ is bounded on $C^k(G)$. Thus,
\begin{align*}
\|\Hem f\|_{C^k(\sphere^{n-1})}
&\leq
\delta^{-1}\|\widetilde{\Hem f}\|_{C^k(G)} =
\delta^{-1}\|\mathcal{T}_\nu \widetilde f\|_{C^k(G)} 
\leq
\delta^{-1}C^{\prime}\|\widetilde f\|_{C^k(G)}
\leq
\delta^{-2}C^{\prime}\|f\|_{C^k(\sphere^{n-1})}.
\end{align*}
Let $C=\delta^{-2}C^{\prime}.$ This completes the proof.
\end{proof}

\vskip3pt

Let $M$ be a smooth manifold and $f\in C^2(M)$. If $df(p)=0$, then $p$ is called a \emph{critical point} of $f$. Let $(x^i)$ be local coordinates around $p$. The \emph{Hessian} of $f$ at $p$, denoted by $\operatorname{Hess}_M f(p)$, is the symmetric bilinear form on $T_pM$ whose matrix with respect to the coordinate basis $(\frac{\partial}{\partial x^i}|_p)_i$ is
$$
(\frac{\partial^2 f}{\partial x^i\partial x^j}(p))_{i,j}.
$$
A critical point $p$ is called \emph{nondegenerate} if this matrix is invertible. Please consult  Milnor~\cite{Milnor} for more background on Morse function. 

\begin{definition}
  Let $M$ be a smooth manifold and $f\in C^2(M)$. The function $f$ is called a \emph{Morse function} if every critical point of $f$ is nondegenerate.
\end{definition}

The property of being Morse is invariant under diffeomorphisms. More precisely, given a diffeomorphism $\Phi:N\rightarrow M$, a function $f:M\rightarrow \R$ is Morse if and only if $f\circ \Phi:N\rightarrow \R$ is Morse. Moreover, if $f$ has critical points $\{p_i\}$, then $f\circ \Phi$ has critical points $\{\Phi^{-1}(p_i)\}$.

Our proof of Theorem~\ref{thm1} rely  on the stability of the number of critical points of Morse functions under sufficiently small perturbations in the $C^2$ topology. 

\begin{lemma}\label{lem16}
 Suppose $U\subseteq \R^n$ is open and $f\in C^2(U)$. If $f$ has exactly one nondegenerate critical point $p\in U$, then there exist a real number $\delta>0$ and an open ball $V$ with $p\in V$ and $\bar V\subset U$, 
  such that for every $g\in C^2(U)$ satisfying  $\|g-f\|_{C^2(\bar V)}<\delta$, the function $g$ has exactly one nondegenerate critical point in $V$. 
\end{lemma}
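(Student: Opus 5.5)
The idea is to study the gradient map $F=\nabla f:U\to\R^n$ and to treat critical points of $g$ as zeros of $G=\nabla g$. Since $p$ is nondegenerate, $A:=D^2f(p)$ is invertible. Set $\kappa=\|A^{-1}\|^{-1}>0$. By continuity of $D^2f$, choose $r>0$ with $\bar V=\bar B(p,r)\subset U$ and
$$\|D^2f(x)-A\|\le \kappa/4 \quad\text{for all } x\in\bar V.$$
With this choice, $D^2f(x)$ is invertible on $\bar V$ and $\|D^2f(x)^{-1}\|$ stays bounded there. I will take $\delta\le\kappa/4$ small, with the precise smallness fixed in the existence step.

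\emph{Nondegeneracy and uniqueness.} Suppose $\|g-f\|_{C^2(\bar V)}<\delta$. Then for every $x\in\bar V$,
$$\|D^2g(x)-A\|\le\kappa/4+\delta\le\kappa/2.$$
Hence every critical point of $g$ in $V$ is nondegenerate, since $D^2g(x)$ is invertible by a Neumann series. For uniqueness, take $x,y\in\bar V$, which is convex. Then
$$\nabla g(x)-\nabla g(y)=\int_0^1 D^2g(y+t(x-y))\,dt\,(x-y)=A(x-y)+E(x-y)$$
with $\|E\|\le\kappa/2$. This gives $|\nabla g(x)-\nabla g(y)|\ge(\kappa/2)|x-y|$, so $\nabla g$ is injective on $\bar V$ and $g$ has at most one critical point in $V$.

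\emph{Existence.} Here I would use a contraction argument. Define $\Phi(x)=x-A^{-1}\nabla g(x)$. Its derivative is $D\Phi(x)=A^{-1}(A-D^2g(x))$, so $\|D\Phi(x)\|\le 1/2$ on $\bar V$. Moreover, since $\nabla f(p)=0$,
$$|\Phi(p)-p|=|A^{-1}\nabla g(p)|\le\kappa^{-1}\delta.$$
For $x\in\bar B(p,r)$ this yields $|\Phi(x)-p|\le r/2+\delta/\kappa$. Imposing $\delta<\kappa r/2$ makes $\Phi$ map $\bar V$ into the open ball $V$, and $\Phi$ is a contraction there. Its unique fixed point is then a zero of $\nabla g$ lying in $V$. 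Together with the previous step, this shows $g$ has exactly one critical point in $V$, and it is nondegenerate.

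The main point requiring care is choosing $r$ and then $\delta$ in the correct order: $r$ comes from uniform continuity of $D^2f$, and $\delta$ must satisfy both $\delta\le\kappa/4$ and $\delta<\kappa r/2$. Only $C^0$ control of $\nabla g-\nabla f$ at $p$ and $C^0$ control of $D^2g-D^2f$ on $\bar V$ are used, and both are dominated by $\|g-f\|_{C^2(\bar V)}$. The hypothesis that $f$ has no other critical points in $U$ is not needed for this local statement. It will presumably matter later, when the authors cover the complement of such balls by a compactness argument.
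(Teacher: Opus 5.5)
Your proof is correct, but it takes a different route from the paper.

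\textbf{The paper's route.} The paper first picks a ball $V$ such that $\nabla^2 f$ is nonsingular on $\bar V$ and $\nabla f\neq 0$ on $\partial V$. It then gets existence by a continuation argument along the homotopy $\nabla f+t\nabla(g-f)$, $t\in[0,1]$. The set of $t$ for which this map has a zero in $V$ is open by the implicit function theorem. It is closed by compactness together with non-vanishing on $\partial V$. Uniqueness is proved separately by contradiction. If $g_m\to f$ in $C^2$ had two critical points $x_m\neq y_m$, both would converge to $p$; this is where the isolation of $p$ in $U$ is used. The integral form of the mean value theorem would then force $\nabla^2 f(p)u_m\to 0$ for unit vectors $u_m$, which is impossible.

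\textbf{Your route.} You freeze the Hessian at $p$ and make everything quantitative. The single estimate $\|D^2g-A\|\le\kappa/2$ on $\bar V$ gives three things at once:
\begin{itemize}
\item nondegeneracy of every critical point in $V$;
\item the injectivity bound $|\nabla g(x)-\nabla g(y)|\ge(\kappa/2)|x-y|$, an explicit version of the paper's Step 2;
\item the fact that $x\mapsto x-A^{-1}\nabla g(x)$ is a $\tfrac12$-contraction of $\bar V$ into $V$.
\end{itemize}
The contraction alone already gives uniqueness, since every zero of $\nabla g$ in $\bar V$ is a fixed point of this map.

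\textbf{Comparison.} Your argument is self-contained: it needs neither the implicit function theorem nor sequential compactness. It produces explicit $r$ and $\delta$ in terms of $\|A^{-1}\|$ and the modulus of continuity of $D^2f$ near $p$. It also confirms, as you note, that the hypothesis that $p$ is the only critical point in $U$ is superfluous. The paper's argument is non-quantitative, since its $\delta$ in Step 2 comes from a contradiction argument. It is topological in spirit, close to a homotopy-invariance-of-degree argument.

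One cosmetic point: depending on how $\|\cdot\|_{C^2(\bar V)}$ is normed, bounding the operator norm of $D^2(g-f)$ and the Euclidean norm of $\nabla(g-f)(p)$ by $\|g-f\|_{C^2(\bar V)}$ may cost a dimensional constant. You can absorb that constant into $\delta$.
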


\begin{proof}
Without loss of generality, assume that $p=0$. Since $\nabla^2 f(0)$ is nonsingular, by continuity there exists an open ball
$V$ centered at $0$ with $ \bar V\subset U$ such that $\nabla^2 f$ is nonsingular on $\bar V$. Shrinking 
$V$ if necessary, we may also assume that $\nabla f \neq 0$ on $\partial V.$
 
\textbf{Step 1.} Show that there is a point $x\in V$ such that $x$ is a nondegenerate critical point of $g$.

By the continuity of $\nabla^2 f$ and compactness of $\bar V$, 
we choose $\delta>0$ small enough that for every $g\in C^2(U)$ with $\|g-f\|_{C^2(\bar V)}<\delta$, the following claims (i) and (ii) hold for all $t\in [0,1]$.
\begin{itemize}
    \item[] (i). $\nabla f+t\nabla(g-f)\neq 0$ on $\partial V$;
\item[] (ii). $\nabla^2 f+t\nabla^2(g-f)$ is nondegenerate on $\bar V$.
\end{itemize}

Let
$$
F(t,x)
=
\nabla f(x)+t\nabla (g-f)(x),\quad \text{for}\ (t,x)\in [0,1]\times V.
$$
We aim to show that $F(1,x)=0$ for some $x\in V$.
Let
$$
S=\{t\in[0,1]: \text{ there exists } x\in V
\text{ such that } F(t,x)=0\}.
$$
Clearly $0\in S$ since $F(0,0)=0$. In the following, we prove that $S$ is open and closed.

Assume $t_0\in S$, and choose 
$x_0\in V$ such that $F(t_0,x_0)=0$. Then
$$
\nabla^2 f(x_0)+t_0\nabla^2 (g-f)(x_0)
$$
is nondegenerate by the choice of $\delta$. Since $x_0\in V$ and $V$ is open, by the implicit
function theorem, there exists a neighborhood $\mathcal{N}(t_0)$ of $t_0$ in $[0,1]$ such that for every $t\in\mathcal{N}(t_0)$, there exists an $x\in V$ that solves $F(t,x)=0$. So, $\mathcal{N}(t_0)\subseteq S$, and $S$ is open in $[0,1]$.

Assume $t_j\in S$ and $t_j\to t$. Choose $x_j\in V$ such that $F(t_j,x_j)=0$. By compactness, after passing to a subsequence, $x_j\to x\in\bar V$. By continuity of $F$, we have $F(t,x)=0$. If $x\in\partial V$, this contradicts (i). Thus, $x\in V$, and $S$ is closed in $[0,1]$.

Hence, $S$ is nonempty, open and closed, and therefore $S=[0,1]$.
In particular, $1\in S$, as desired. Moreover, by (ii), the Hessian of $g$
is nondegenerate at any critical point of $g$ in $V$.

\textbf{Step 2.} Show that there is exactly one $x\in V$ with $\nabla g(x)=0$ for sufficiently small   $\delta$. 

 For 
 $\forall \  m\in \mathbb{N}$, assume there exists  $g_m \in  C^2(U)$ with $\|g_m-f\|_{C^2(\bar V)}\leq \frac1m$ and  two distinct  points $x_m, y_m \in V$ such that $\nabla g_m(x_m)=\nabla g_m(y_m)=0$. 
Then $\lim_{m\to \infty}x_m=0$. 
Otherwise, there exists a subsequence of $\{x_m\}$ converging to $x\in \bar V\setminus\{0\}$ with $\nabla f(x)=0$, 
contradicting the assumption. Similarly, $\lim_{m\to \infty}y_m=0$.
Let
$$ u_m:=\frac{x_m-y_m}{\|x_m-y_m\|}, \quad m\in \mathbb{N}.$$

By the compactness of $\bar V$, $\nabla^2 f$ is uniformly continuous on $\bar V$. 
Since $x_m,y_m\to 0$, we have
\begin{equation}\label{eq18}
  \sup_{0\leq t\leq 1}
  \left\|\nabla^2 f(y_m+t(x_m-y_m))-\nabla^2 f(0)\right\|\leq \omega_{\nabla^2 f}(\max(\|x_m\|,\|y_m\|))\to 0, \quad \text{as}\ m\rightarrow \infty,
\end{equation}
where $\omega_{\nabla^2 f}$ is the modulus of continuity of $\nabla^2 f$ on $\bar V$.
Thus,
\begin{align*}
\frac{\nabla g_m(x_m)- \nabla g_m(y_m)}{\|x_m-y_m\|}
&= \int_0^1 \nabla^2 g_m(y_m+t(x_m-y_m)) u_m \dd t\\
&= \int_0^1 \nabla^2 f(y_m+t(x_m-y_m)) u_m \dd t+O\big(\frac{1}{m}\big)\\
&= \nabla^2 f(0)u_m+o(1),
\end{align*}
where we used $\|g_m-f\|_{C^2(\bar V)}\leq \frac1m$ for the second equality and \eqref{eq18} for the last equality.

However,
$$\|\nabla^2 f(0)u_m\| \geq \|(\nabla^2 f(0))^{-1}\|^{-1},$$
where $\|\cdot\|$ is the operator norm.
Hence
$$
0=\liminf_{m\to \infty}\left\|\frac{\nabla g_m(x_m)- \nabla g_m(y_m)}{\|x_m-y_m\|}\right\|
\geq \|(\nabla^2 f(0))^{-1}\|^{-1}>0,
$$
a contradiction.
\end{proof}

\begin{lemma}\label{lem5}
  Let $M$ be a compact smooth manifold without boundary and let $f\in C^2(M)$ be Morse. Then there exists $\delta>0$ such that every $g\in C^2(M)$ with $\|g-f\|_{C^2(M)}<\delta$ is also Morse and has exactly the same number of critical points as $f$.
\end{lemma}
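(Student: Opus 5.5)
The plan is to reduce Lemma~\ref{lem5} to the local statement Lemma~\ref{lem16} by working in finitely many coordinate charts. First, since $M$ is compact and every critical point of a Morse function is isolated (nondegeneracy plus the inverse function theorem applied to $df$ in a chart), $f$ has finitely many critical points $p_1,\dots,p_N$. For each $p_i$ I will fix a chart $\phi_i:U_i\to\R^n$ around $p_i$ whose domain contains no other critical point. Applying Lemma~\ref{lem16} to $f\circ\phi_i^{-1}$ on $\phi_i(U_i)$ gives $\delta_i>0$ and an open ball $V_i$ with $\phi_i(p_i)\in V_i$ and $\bar V_i\subset\phi_i(U_i)$. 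Put $W_i=\phi_i^{-1}(V_i)$; shrinking the $V_i$ if needed, the $\bar W_i$ are pairwise disjoint.

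The first step is to transfer the $C^2$ closeness. I fix a finite atlas defining $\|\cdot\|_{C^2(M)}$. On each compact set $\phi_i^{-1}(\bar V_i)$ the chain rule and the smoothness of the transition maps give a constant $C_i$ with $\|(g-f)\circ\phi_i^{-1}\|_{C^2(\bar V_i)}\le C_i\|g-f\|_{C^2(M)}$. So if $\|g-f\|_{C^2(M)}<\min_i\delta_i/C_i$, then Lemma~\ref{lem16} shows that $g$ has exactly one critical point in each $W_i$, and that it is nondegenerate. Nondegeneracy is independent of the chart, since at a critical point the Hessian transforms by congruence under a change of coordinates.

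The second step, which I expect to be the main obstacle, is to exclude critical points of $g$ on the compact set $L=M\setminus\bigcup_i W_i$. Here $f$ has no critical points, so by compactness $|df|\ge c>0$ on $L$, measured with respect to a fixed Riemannian metric or equivalently in the finitely many charts. If $\|g-f\|_{C^1(M)}$ is small enough, then $|dg|\ge |df|-|d(g-f)|>c/2$ on $L$. To make this precise I will cover $L$ by finitely many compact pieces, each inside a single chart, and compare the coordinate gradients there; this again gives a threshold proportional to $\|g-f\|_{C^2(M)}$.

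Finally, I take $\delta$ to be the minimum of the thresholds from the two steps. Then every $g$ with $\|g-f\|_{C^2(M)}<\delta$ has exactly one critical point in each $W_i$ and none elsewhere, and all of these are nondegenerate. Hence $g$ is Morse with exactly $N$ critical points. The case $N=0$ cannot occur on a compact $M$, but the argument covers it anyway.
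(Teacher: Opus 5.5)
Your proposal matches the paper's proof essentially step for step: finiteness of the critical set, local application of Lemma~\ref{lem16} in charts around each $p_i$, transfer of the global $C^2$ bound to each chart, and a uniform lower bound on $|df|$ over the compact complement of the neighborhoods. One small fix: arrange disjointness by choosing the chart domains $U_i$ pairwise disjoint before you invoke Lemma~\ref{lem16}, as the paper does, rather than by shrinking the balls $V_i$ afterwards, because the lemma's existence conclusion is not stated for arbitrary smaller balls.
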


\begin{proof}
The critical set of $f$ is closed and, since $f$ is Morse, discrete. It is therefore a compact discrete set and hence finite. The assumption $\partial M=\varnothing$ first ensures that the maximum and minimum points of $f$, which exist by compactness, are interior points and hence critical points. Thus, the set of critical points is nonempty; denote it by $\{p_1,\dots,p_\ell\}$, where $\ell\geq 1$.

The assumption $\partial M=\varnothing$ is also used here: it guarantees that every $p_j$ has a coordinate neighborhood modeled on an open subset of $\R^{\dim M}$, rather than on a half-space, so that the local stability result in Lemma~\ref{lem16} applies. Choose pairwise disjoint coordinate neighborhoods $Q_j$ of $p_j$ such that $p_j$ is the only critical point of $f$ in $Q_j$. Fix coordinate maps $\phi_j:Q_j\to\phi_j(Q_j)\subset\R^{\dim M}$ with $\phi_j(p_j)=0$, and shrink $Q_j$ so that the Hessian of $f\circ\phi_j^{-1}$ is nondegenerate throughout $\phi_j(Q_j)$. For each $j$, Lemma~\ref{lem16}, applied to $f\circ\phi_j^{-1}$, gives an open ball $V_j$ with $\bar V_j \subset\phi_j(Q_j)$ and a number $\eta_j>0$ such that every $q\in C^2(\phi_j(Q_j))$ satisfying
$$
\|q-f\circ\phi_j^{-1}\|_{C^2(\bar V_j)}<\eta_j
$$
has exactly one nondegenerate critical point in $V_j$. Since there are only finitely many charts and restriction and pullback by each fixed coordinate map are continuous in the $C^2$ topology, there exists $\delta_0>0$ such that $\|g-f\|_{C^2(M)}<\delta_0$ implies all of these local inequalities with $q=g\circ\phi_j^{-1}$. Consequently, $g$ has exactly one nondegenerate critical point in each
  $$
    U_j:=\phi_j^{-1}(V_j).
  $$

Let $F=M\setminus\bigcup_{j=1}^{\ell}U_j$. If $F=\varnothing$, the conclusion follows by taking $\delta=\delta_0$. Otherwise, fix a Riemannian metric on $M$. Since $F$ is compact and $df$ does not vanish on $F$, we have
$$
m:=\min_{x\in F}\|df(x)\|>0.
$$
Choose $\delta_1>0$ such that $\|g-f\|_{C^2(M)}<\delta_1$ implies $\|d(g-f)\|_{C^0(M)}<m/2$. Then $\|dg(x)\|\geq m/2$ for every $x\in F$, so $g$ has no critical point in $F$. Taking $\delta=\min\{\delta_0,\delta_1\}$, we conclude that $g$ has exactly $\ell$ critical points, all of which are nondegenerate.
\end{proof}

Indeed, the fact that a sufficiently small $C^2$ perturbation of a Morse function is again Morse is standard.  However, we need the above \emph{stronger} conclusion that the perturbation has exactly the \emph{same} number of critical points. 

One of the central ingredients to  prove  Theorem~\ref{thm1}, is the construction of smooth odd Morse function on $\sphere^{n-1}$ with exactly two critical points and vanishing degree-one component. 

\begin{lemma}\label{lem:alpha}
Let $n\geq 3$. There exists a smooth even function $\alpha:[-1,1]\to\R$ such that
\begin{equation}\label{eq:alpha-moment}
\int_{-1}^1 (1-t^2)^{(n-1)/2}e^{i\alpha(t)}\dd t=0.
\end{equation}
\end{lemma}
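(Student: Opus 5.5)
Write $w(t)=(1-t^2)^{(n-1)/2}$, a positive even weight on $(-1,1)$. We need a smooth even real function $\alpha$ with
$$\int_{-1}^1 w(t)\cos\alpha(t)\dd t=0 \quad\text{and}\quad \int_{-1}^1 w(t)\sin\alpha(t)\dd t=0 .$$
The plan is to take $\alpha$ from a one-parameter family and use a continuity/winding argument in the complex plane.

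First, reduce to the half-interval. Since $w$ and $\alpha$ are even, condition \eqref{eq:alpha-moment} is equivalent to
$$\int_0^1 w(t)e^{i\alpha(t)}\dd t=0 .$$
Take $\alpha(t)=\lambda\,\beta(t)$ with $\beta(t)=t^2$, which is smooth and even, and a real parameter $\lambda$ still to be chosen. Set
$$\Phi(\lambda)=\int_0^1 w(t)e^{i\lambda t^2}\dd t .$$
It then suffices to find $\lambda$ with $\Phi(\lambda)=0$, and this is two real equations in one unknown. So a single parameter is not enough in general, and we use two: take $\alpha=\lambda t^2+\gamma$ with a constant $\gamma$. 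Then $e^{i\alpha}=e^{i\gamma}e^{i\lambda t^2}$, so the rotation $\gamma$ is irrelevant. This shows that a bare constant shift cannot help, and we must instead find two shape parameters.

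A cleaner route is a step-like profile. Choose a smooth even $\alpha$ that is approximately $0$ on $[0,a)$ and approximately $\theta$ on $(a,1]$. Then $\int_0^1 we^{i\alpha}$ is approximately $W_1+W_2e^{i\theta}$, where $W_1=\int_0^a w$ and $W_2=\int_a^1 w$. This is a two-vector sum, which vanishes exactly when $\theta=\pi$ and $W_1=W_2$. The exact plan has two steps.

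\emph{Step 1.} Fix a smooth nondecreasing function $\chi$ on $\R$ with $\chi=0$ on $(-\infty,0]$ and $\chi=1$ on $[1,\infty)$. For a small $\eta>0$, a threshold $a\in(\eta,1-\eta)$ and an angle $\theta$, set
$$\alpha_{a,\theta}(t)=\theta\,\chi\bigl((|t|-a)/\eta\bigr).$$
This is smooth and even, since it is constant near $t=0$.

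\emph{Step 2.} Consider the map $G(a,\theta)=\int_0^1 we^{i\alpha_{a,\theta}}\dd t\in\mathbb C$. As $\eta\to0$ it converges, uniformly in $(a,\theta)$, to
$$G_0(a,\theta)=W_1(a)+W_2(a)e^{i\theta} .$$
The map $G_0$ has a zero at the point $(a_0,\pi)$, where $a_0$ is chosen with $W_1(a_0)=W_2(a_0)=\tfrac12\int_0^1 w$. This zero is nondegenerate, because $\partial_a G_0=w(a)(1-e^{i\theta})$ is real and nonzero at $\theta=\pi$, while $\partial_\theta G_0=iW_2e^{i\theta}$ is purely imaginary and nonzero. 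Hence $G_0$ has local degree $\pm1$ on a small disc $D$ around $(a_0,\pi)$. By the degree (winding-number) argument, the uniform approximation keeps $G\neq0$ on $\partial D$ for small $\eta$, and $G$ has the same winding number there, so $G$ vanishes somewhere in $D$. Alternatively, one could use the homotopy/implicit-function argument of Lemma~\ref{lem16} in $\R^2$; for that one also needs $C^1$ closeness, which holds after smoothing the $a$-dependence.

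The main obstacle is making the closeness of $G$ to $G_0$ quantitative. The two functions differ only on the transition layer $[a,a+\eta]$, where the integrand is bounded by $\max w\le1$, so
$$|G-G_0|\le 2\eta$$
uniformly in $(a,\theta)$. Since $\min_{\partial D}|G_0|>0$, taking $\eta$ small enough makes $G$ nonvanishing on $\partial D$ with the same winding number, and Rouché-type reasoning then finishes the proof. This argument does not use $n\ge3$ beyond positivity and integrability of $w$.
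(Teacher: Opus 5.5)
Your argument is correct, but it reaches the zero by a different route than the paper. You build an \emph{approximate} solution: a smoothed two-level profile equal to $0$ below the weight-bisecting threshold $a_0$ and to $\pi$ above it. You then remove the $O(\eta)$ error from the transition layer by a winding-number argument around the nondegenerate zero $(a_0,\pi)$ of $G_0$, whose Jacobian determinant is $-2w(a_0)W_2(a_0)\neq 0$. The details all check out: $\alpha_{a,\theta}$ is smooth and even because it is constant for $|t|<a$, $G$ is continuous, and $|G-G_0|\le 2\eta$ holds uniformly.

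The paper instead makes the integral vanish \emph{exactly}, with no limit and no topology. It reparametrizes by the normalized cumulative weight $s(t)=W^{-1}\int_0^t w(\tau)\dd\tau$, so that $w(t)\dd t=W\dd s$. It then sets $\alpha=a\circ s$ (extended evenly), where $a$ is smooth on $[0,1]$, equals $0$ near $0$ and $2\pi$ near $1$, and satisfies $a(x+\tfrac12)=\pi+a(x)$. The two halves of $\int_0^1 e^{ia(r)}\dd r$ then cancel identically, transition layers included. In other words, the paper's profile climbs $0\to\pi\to2\pi$, with the second climb a $\pi$-rotated copy of the first. That is exactly what removes the error term you had to absorb with degree theory.

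The paper's route is explicit and elementary. Yours costs an extra topological step, but it is a general perturbation principle that needs no exact cancellation identity, and it also shows the solution is stable.

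Two minor points. The opening paragraph about $\lambda t^2$ and the constant $\gamma$ is a discarded false start and should be deleted. In your alternative remark, no additional smoothing in $a$ is needed: $G$ is already $C^1$ in $(a,\theta)$ and $C^1$-close to $G_0$ as $\eta\to0$. Note, though, that Lemma~\ref{lem16} is stated for gradients, so you would be reusing its proof rather than its statement.
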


\begin{proof}
Let $w(t)=(1-t^2)^{(n-1)/2}$, $W=\int_0^1w(t)\dd t$, and
$$
s(t)=W^{-1}\int_0^t w(\tau)\dd\tau,
\quad 0\leq t\leq 1.
$$
Then $s$ is smooth in $(0,1)$ and increasing on $[0,1]$, with $s(0)=0$ and $s(1)=1$.

 \begin{figure}[H]
\centering

\pgfmathdeclarefunction{smoothstep}{1}{%
  \pgfmathparse{#1}%
  \ifdim\pgfmathresult pt<0.000001pt
    \pgfmathparse{0}%
  \else
    \ifdim\pgfmathresult pt>0.999999pt
      \pgfmathparse{1}%
    \else
      \pgfmathparse{%
        exp(-1/(#1))/
        (exp(-1/(#1))+exp(-1/(1-(#1))))
      }%
    \fi
  \fi
}

\begin{tikzpicture}[x=9cm,y=1.35cm]

  \draw[->] (-0.08,0) -- (1.1,0) node[right] {$r$};
  \draw[->] (0,-0.15) -- (0,2.35) node[above] {$a(r)$};

  \draw[
    thick,
    samples=401,
    domain=0:1,
    variable=\r
  ]
  plot
  (
    {\r},
    {smoothstep(6*\r-1)+smoothstep(6*\r-4)}
  );

  \node[below left] at (0,0) {$0$};
  \node[below] at (1/6,0) {$\frac16$};
  \node[below] at (1/3,0) {$\frac13$};
  \node[below] at (1/2,0) {$\frac12$};
  \node[below] at (2/3,0) {$\frac23$};
  \node[below] at (5/6,0) {$\frac56$};
  \node[below] at (1,0) {$1$};

  \node[left] at (0,1) {$\pi$};
  \node[left] at (0,2) {$2\pi$};

  \draw[dashed,thin] (1/3,0) -- (1/3,1);
  \draw[dashed,thin] (1/2,0) -- (1/2,1);
  \draw[dashed,thin] (2/3,0) -- (2/3,1);
  \draw[dashed,thin] (5/6,0) -- (5/6,2);
  \draw[dashed,thin] (1,0)   -- (1,2);
  \draw[dashed,thin] (0,1) -- (2/3,1);
  \draw[dashed,thin] (0,2) -- (1,2);

\end{tikzpicture}

\caption{Graph of  the function $a$.}
\label{Fig1}
\end{figure}

Let $a\in C^\infty([0,1])$ be as in Figure~\ref{Fig1}.
In particular, we require that $a=0$ near $0$; $a=\pi$ near $\frac12$;  $a=2\pi$ near $1$; and
\begin{equation}\label{eq21}
  a(x+\frac12)=\pi+a(x), \  \forall \ x\in[0,\frac{1}{2}].
 \end{equation}
 
Define $\alpha(t)=a(s(t))$ for $0\leq t\leq1$, and extend it evenly to
$[-1,1]$.
Since $a$ is constant near $0$ and $1$, the extension is globally smooth.
Using $w(t)\dd t=W\dd s(t)$ and \eqref{eq21}, we have
$$
\begin{aligned}
\int_{-1}^1 w(t)e^{i\alpha(t)}\dd t
=2W\int_0^1 e^{ia(r)}\dd r 
=2W\int_0^{\frac{1}{2}} e^{ia(r)}\dd r
  +2W\int_0^{\frac{1}{2}} e^{i(\pi+a(r))}\dd r=0.
\end{aligned}
$$
This proves the lemma.
\end{proof}

Now we construct the desired Morse function $f$. 
We start with an \emph{odd} linear function that is Morse with exactly two nondegenerate critical points but has a nonzero degree-one component. We then compose it with a suitably \emph{odd} diffeomorphism of the sphere, which preserves the Morse property and the number of critical points while arranging for the first moment to vanish.

Fix $n\geq 3$. Write each point $u$ of $\sphere^{n-1}\subset\R^n$ as
$$
u=(x_1,x_2,y,t)
\in \R\oplus\R\oplus\R^{n-3}\oplus\R,
\qquad
x_1^2+x_2^2+|y|^2+t^2=1.
$$
We regard $t$ as the altitude coordinate of $u$. The construction rotates the $(x_1,x_2)$-plane by an angle $\alpha(t)$ determined by Lemma \ref{lem:alpha}.

\begin{lemma}\label{prop2}
Let $n\geq 3$. There exists a Morse function
$f\in C^\infty_{\mathrm{odd}}(\sphere^{n-1})$ which has exactly two critical points $p$ and $-p$, and satisfies
$\int_{\sphere^{n-1}} u f(u)\dd\Haus^{n-1}(u)=0.$  
\end{lemma}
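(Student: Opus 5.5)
Define the odd diffeomorphism $\Phi:\sphere^{n-1}\to\sphere^{n-1}$ that rotates the $(x_1,x_2)$-plane by the angle $\alpha(t)$ from Lemma~\ref{lem:alpha}:
$$
\Phi(x_1,x_2,y,t)=\bigl(x_1\cos\alpha(t)-x_2\sin\alpha(t),\ x_1\sin\alpha(t)+x_2\cos\alpha(t),\ y,\ t\bigr).
$$
Since $\alpha$ is smooth on $[-1,1]$ and the map is a rotation in the first two coordinates for each fixed $t$, $\Phi$ is a smooth diffeomorphism of $\sphere^{n-1}$ with inverse given by the angle $-\alpha(t)$. Because $\alpha$ is even, $\Phi(-u)=-\Phi(u)$. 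Set $f=\ell\circ\Phi$, where $\ell(u)=x_1$ is the first coordinate. Then $f$ is smooth and odd. The linear function $\ell$ is Morse on $\sphere^{n-1}$ with exactly two nondegenerate critical points $\pm e_1$. By the diffeomorphism invariance recalled before Lemma~\ref{lem16}, $f$ is therefore Morse with exactly two critical points $p=\Phi^{-1}(e_1)$ and $\Phi^{-1}(-e_1)=-p$, the latter equality using the oddness of $\Phi$.

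It remains to verify $\int u f(u)\dd\Haus^{n-1}(u)=0$, where
$$
f(u)=x_1\cos\alpha(t)-x_2\sin\alpha(t).
$$
\begin{itemize}
\item For the $y$-components and the $t$-component, use the symmetry $x_1\mapsto -x_1$, $x_2\mapsto -x_2$, which preserves $\Haus^{n-1}$, fixes $y$ and $t$, and flips the sign of $f$. The integrals of $y_jf$ and $tf$ therefore vanish.
\item For the $x_1$- and $x_2$-components, form the complex combination
$$
\int (x_1+ix_2)f\dd\Haus^{n-1}.
$$
Writing $x_1+ix_2=re^{i\phi}$, we have $f=r\cos(\phi+\alpha(t))$.
\end{itemize}

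To evaluate this, disintegrate the sphere measure with respect to the coordinates $(x_1,x_2,t)\in\R^3$. By integrating out the $y$ variables (or, for $n=3$, directly), the pushforward of $\Haus^{n-1}$ onto the unit ball $B^3$ in the coordinates $(x_1,x_2,t)$ has density $c_n(1-r^2-t^2)^{(n-5)/2}$ for $n\geq 4$. For $n=3$ it is the surface measure on $\sphere^2$; I will handle this either separately or by the formula $\dd\Haus^2 = \dd\phi\,\dd t$ via Archimedes. Integrating in $\phi$ first gives
$$
\int_0^{2\pi}e^{i\phi}\cos(\phi+\alpha)\dd\phi=\pi e^{-i\alpha(t)}.
$$
The complex moment thus reduces to
$$
\pi\int_{-1}^1 e^{-i\alpha(t)}\Bigl(\int r^2\,\mu_t(\dd r)\Bigr)\dd t,
$$
where $\mu_t$ is the radial density. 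The inner factor is a multiple of $(1-t^2)^{(n-1)/2}$, by scaling: the $t$-slice of $\sphere^{n-1}$ is a sphere of radius $\sqrt{1-t^2}$ and dimension $n-2$, with $\dd\Haus^{n-1}=(1-t^2)^{-1/2}\dd t\,\dd\Haus^{n-2}$, and the second moment of $x_1^2+x_2^2$ over it scales like $(1-t^2)^{(n-2)/2+1}$. Combining these, the weight is $(1-t^2)^{(n-1)/2}$ up to a constant. Hence the moment is a constant multiple of the conjugate of \eqref{eq:alpha-moment}, which vanishes by Lemma~\ref{lem:alpha}.

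The main obstacle is this last computation: getting the correct exponent $(n-1)/2$ for the weight, uniformly in $n\geq 3$ including $n=3$. I would do it via the slice argument above rather than the three-dimensional projection, since the slicing handles all $n\geq 3$ at once.
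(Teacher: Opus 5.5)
Your proof is correct and follows the paper's argument. You use the same $t$-dependent rotation $\Phi$ of the $(x_1,x_2)$-plane, the same $f=x_1\circ\Phi$ inheriting the Morse property and two critical points from $x_1$, and the same slice-by-slice computation in which the $(x_1,x_2)$-moments reduce to the weight $(1-t^2)^{(n-1)/2}$ and hence to Lemma~\ref{lem:alpha}; the reflection $(x_1,x_2)\mapsto(-x_1,-x_2)$ for the $y$- and $t$-components, the complex combination $x_1+ix_2$, and your explicit check that $\Phi^{-1}(-e_1)=-p$ via the oddness of $\Phi$ are only cosmetic variations on the paper's real computation over $\sphere^{n-2}$.
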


\begin{proof}
We divide the proof into two steps.

\textbf{Step 1.} Construct a Morse function $f$.

Let $\alpha$ be as in Lemma~\ref{lem:alpha}. For $\theta\in\R$, write
$$
\phi_\theta=
\begin{pmatrix}
\cos\theta&-\sin\theta\\
\sin\theta&\cos\theta
\end{pmatrix}.
$$
Define
$$
\Phi:\sphere^{n-1}\to\sphere^{n-1},
\quad
\Phi(x_1,x_2,y,t)=(\phi_{\alpha(t)}(x_1,x_2),y,t).
$$
Then $\Phi$ is a smooth diffeomorphism, with inverse obtained by replacing
$\alpha(t)$ by $-\alpha(t)$. Since $\alpha$ is even, $\Phi$ is odd.
Let $f$ be the first coordinate of $\Phi$, i.e.,
$$
f(x_1,x_2,y,t)=x_1\cos\alpha(t)-x_2\sin\alpha(t), \quad (x_1,x_2,y,t)\in \sphere^{n-1}.
$$

\textbf{Step 2.} Show that $f$ is the desired function.

  By the construction, $f=x_1\circ\Phi$.
  Since the function $x_1$ is Morse with exactly two critical points ($\pm e_1=(\pm 1,0,\ldots,0)$) and $\Phi$ is a diffeomorphism,
$f=x_1\circ\Phi$ is Morse with precisely the two critical points
$\Phi^{-1}(e_1)$ and $\Phi^{-1}(-e_1)$. Also, $f$ is odd and smooth because both $x_1$ and $\Phi$ are odd and smooth. It remains to prove that the first moment of $f$ vanishes.
For
$$
u=(x_1,x_2,y,t)\in \sphere^{n-1}\setminus\{\pm e_n\},
$$
write
$$
v=(v_1,\dots,v_{n-1})
=
\frac{(x_1,x_2,y)}{\sqrt{1-t^2}}
\in\sphere^{n-2}.
$$
Then $u_i=\sqrt{1-t^2}\,v_i$ for $1\leq i\leq n-1$, $u_n=t$, and
$$
f(u)=\sqrt{1-t^2}\,
\bigl(v_1\cos\alpha(t)-v_2\sin\alpha(t)\bigr).
$$

So
\begin{align*}
    &\int_{\sphere^{n-1}} u_nf(u)\dd\Haus^{n-1}(u)
    =&
    \int_{\sphere^{n-2}}\int_{-1}^1
    t\bigl(v_1\cos\alpha(t)-v_2\sin\alpha(t)\bigr)
    (1-t^2)^{\frac{n-2}{2}}
    \dd t\dd\Haus^{n-2}(v)
    =0,
\end{align*}
where the last equality holds because $\alpha$ is even;
for $1\leq i\leq n-1$,
\begin{align*}
    &\int_{\sphere^{n-1}} u_if(u)\dd\Haus^{n-1}(u)\\
    =&
    \int_{\sphere^{n-2}}\int_{-1}^1
    v_i\bigl(v_1\cos\alpha(t)-v_2\sin\alpha(t)\bigr)
    (1-t^2)^{(n-1)/2}
    \dd t\dd\Haus^{n-2}(v) \\
    =&
    2\int_{\sphere^{n-2}} v_i
    \int_0^1
    \bigl(v_1\cos\alpha(t)-v_2\sin\alpha(t)\bigr)
    (1-t^2)^{(n-1)/2}
    \dd t\dd\Haus^{n-2}(v)=0,
\end{align*}
where the last equality follows from the real and imaginary parts of
\eqref{eq:alpha-moment}.
Therefore, 
$$
\int_{\sphere^{n-1}}uf(u)\dd\Haus^{n-1}(u)=0
$$
and the lemma follows.
\end{proof}

\begin{proof}[Proof of Theorem~\ref{thm1}]
 Lemmas~\ref{lem:lower3} and~\ref{lem:planar-upper} give $\mu(2)=3$, and Lemma~\ref{lem:lower1} gives $\mu(n)\geq 1$ for all $n\geq 3$. It therefore suffices to prove $\mu(n)\leq 1$ for $n\geq 3$.
 Fix $n\geq 3$. We will construct a family of smooth strictly convex bodies in $\R^n$ with exactly one hyperplane section whose centroid coincides with that of the entire body, proving $\mu(n)\leq 1$. 

\textbf{Step 1.} We present the construction of the desired family of convex bodies in this first step.
By Lemma~\ref{prop2}, there exists a function
$f\in C^\infty_{\mathrm{odd}}(\sphere^{n-1})$ that is Morse, has exactly two critical points, and satisfies
$$
\int_{\sphere^{n-1}}u f(u)\dd\Haus^{n-1}(u)=0.
$$
By Lemma~\ref{lem14}, the hemispherical transform is an isomorphism on smooth odd functions. Hence
$h:=\Hem^{-1}f$
is a smooth odd function on $\sphere^{n-1}$. For sufficiently small $\eps>0$, we have $1+\eps h>0$. 

Let
\begin{equation}\label{eq:Keps-main}
K_\eps
=
\left\{ru:u\in\sphere^{n-1},\ 0\leq r\leq
(1+\eps h(u))^{1/(n+1)}\right\}.
\end{equation}
We will show that for all sufficiently small $\varepsilon>0$, $K_\varepsilon$ has exactly one hyperplane section whose centroid coincides with the centroid of $K_\varepsilon$ itself.

\textbf{Step 2.} We claim that $K_\varepsilon$ is smooth and strictly convex for all sufficiently small $\varepsilon>0$. Indeed,
Lemma~\ref{lem4}, together with $\beta=1/(n+1)$, gives
$$
(1+\eps h)^{1/(n+1)}
=1+\frac{\eps}{n+1}h+O(\eps^2)
\quad\text{in }C^2(\sphere^{n-1}).
$$
Consequently, the radial function
$\rho_{K_\eps}(u)=(1+\eps h(u))^{1/(n+1)}$
converges to  $1$ (i.e., the radial function of  the Euclidean unit ball)  in the $C^2$ topology. By Lemma~\ref{lem3},  it follows that $K_\eps$ is a smooth convex body with positive Gauss curvature everywhere, and hence is strictly convex.

\textbf{Step 3.} We show that $c(K_\varepsilon)=0$.
The vanishing first moment of $f$ is equivalent, by \eqref{eq:no-H1}, to the absence of a degree-one spherical harmonic component. Since $\Hem h=f$, the last assertion of Lemma~\ref{lem14} implies that $h$ also has no degree-one component. Therefore,
\begin{equation}\label{eq6}
\int_{\sphere^{n-1}}u h(u)\dd\Haus^{n-1}(u)=0.
\end{equation}
Using polar coordinates and \eqref{eq:Keps-main}, we obtain
$$
\begin{aligned}
\int_{K_\eps}x\dd\Haus^n(x)
&=\int_{\sphere^{n-1}}
\big(\int_0^{\rho_{K_\eps}(u)}u\,t^n\dd t\big)
\dd\Haus^{n-1}(u)\\
&=\frac1{n+1}\int_{\sphere^{n-1}}
u\rho_{K_\eps}(u)^{n+1}\dd\Haus^{n-1}(u)\\
&=\frac1{n+1}\int_{\sphere^{n-1}}
u(1+\eps h(u))\dd\Haus^{n-1}(u)=0,
\end{aligned}
$$
where the last equality follows from \eqref{eq6} and the symmetry of $\sphere^{n-1}$. Thus, $c(K_\eps)=0$.

\textbf{Step 4.} We prove that $K_\varepsilon$ has exactly one hyperplane section whose centroid coincides with the centroid of $K_\varepsilon$ itself.
By formula~\eqref{eq3} and Lemma~\ref{lem4}, now applied with $\beta=n/(n+1)$, the half-space volume function of $K_\eps$ satisfies
\begin{align}
A_{K_\eps}
&=\frac1n\Hem\big((1+\eps h)^{n/(n+1)}\big)\nonumber\\
&=\frac1n\Hem\big(1+\frac{n}{n+1}\eps h+R_\eps\big)\nonumber\\
&=\frac{\Haus^{n-1}(\sphere^{n-1})}{2n}
+\frac{\eps}{n+1}f+\frac1n\Hem(R_\eps),
\label{eq20}
\end{align}
where $\|R_\eps\|_{C^2(\sphere^{n-1})}=O(\eps^2)$. Define
$$
g_\eps
=\frac{n+1}{\eps}
\big(A_{K_\eps}-\frac{\Haus^{n-1}(\sphere^{n-1})}{2n}\big).
$$
The $C^2$ boundedness of the hemispherical transform in Lemma~\ref{lem1}, together with \eqref{eq20}, yields
\begin{equation}\label{eq7}
\|g_\eps-f\|_{C^2(\sphere^{n-1})}
=\frac{n+1}{n\eps}\|\Hem(R_\eps)\|_{C^2(\sphere^{n-1})}
=O(\eps).
\end{equation}
Since $f$ is Morse with exactly two critical points, $\sphere^{n-1}$ is a compact smooth manifold without boundary, and \eqref{eq7} shows that $g_\eps\to f$ in $C^2(\sphere^{n-1})$, Lemma~\ref{lem5} applies. Thus, $g_\eps$ has exactly two critical points for every sufficiently small $\eps>0$. The functions $g_\eps$ and $A_{K_\eps}$ differ only by a positive affine rescaling, so $A_{K_\eps}$ also has exactly two critical points.

For every $u\in\sphere^{n-1}$,
$A_{K_\eps}(-u)=\Haus^n(K_\eps)-A_{K_\eps}(u).$
Thus, the critical points of $A_{K_\eps}$ occur in antipodal pairs. Denote the two critical points by $u_\eps$ and $-u_\eps$. By Lemma~\ref{lemma1}, both correspond to a centroidal section, and they determine the same hyperplane $u_\eps^\perp$. Conversely, if an arbitrary affine hyperplane section of $K_\eps$ has centroid $o$, then the hyperplane itself contains $o$ and is therefore of the form $v^\perp$ for some $v\in\sphere^{n-1}$. Lemma~\ref{lemma1} then forces $v$ to be a critical point of $A_{K_\eps}$. Hence $K_\eps\cap u_\eps^\perp$ is the unique hyperplane section whose centroid agrees with $c(K_\eps)=o$.
\end{proof}

\vskip3pt 
{\bf Conflict of Interest}: We declare that we have no conflict of interest.

{\bf Data Availability}: Not applicable.

\end{document}